\documentclass{amsart}
\usepackage{amssymb}
\usepackage{hyperref}
\usepackage{bbm}
\newcommand{\R}{\mathbb{R}}
\newcommand{\C}{\mathbb{C}}
\newcommand{\Z}{\mathbb{Z}}

\DeclareMathOperator{\SPAN}{span}

\newtheorem{theorem}{Theorem}

\newtheorem{lemma}{Lemma}

\theoremstyle{definition}
\newtheorem{remark}{Remark}

\title[Non-existence of internal mode for the Zakharov system]
{Non-existence of internal mode for small solitary waves of the 1D Zakharov system}
\author{Yvan Martel}
\address{Laboratoire de Mathématiques de Versailles, UVSQ, Université Paris-Saclay, CNRS,
78000 Versailles, France and Institut Universitaire de France}
\email{yvan.martel@uvsq.fr}

\author{Guillaume Rialland}
\address{Laboratoire de Mathématiques de Versailles, UVSQ, Université Paris-Saclay, CNRS,
78000 Versailles, France}
\email{guillaume.rialland@uvsq.fr}

\begin{document}

\begin{abstract}
We prove that the linearised operator around any sufficiently small solitary wave of the one-dimensional Zakharov system has no internal mode. This spectral result, along with its proof, is expected to play a role in the study of the asymptotic stability of solitary waves.
\end{abstract}

\maketitle

\section{Introduction and main result}

In this article, we consider the one-dimensional scalar Zakharov system, which we write 
under the following form
\begin{equation}\label{ZL}
\left\{ \begin{aligned}i\partial_t u &= - \partial_x^2 u - nu \\
\partial_t n &= - \partial_x v \\
\partial_t v &= - \partial_x n + \partial_x (|u|^2 )
\end{aligned}\right.
\end{equation}
where $u:(t , x) \in \R \times \R\mapsto \C$,
$n:(t , x) \in \R \times \R\mapsto \R$, $v:(t , x) \in \R \times \R\mapsto \R$.
This equation was introduced by V.E. Zakharov in~\cite{Za} to describe the propagation of Langmuir turbulence in a plasma. We also refer to \cite{Gib,SuSu} for the derivation of this equation.

We observe that for a solution $(u , v , n)$ to \eqref{ZL}, three quantities,
respectively called the mass, the energy and the momentum, are formally preserved through time: 
\[
\int_{\R} |u|^2,\quad
\int_{\R} \left( | \partial_x u |^2 - n |u|^2 + \frac{n^2}{2} + \frac{v^2}{2} \right),\quad
\Im \left( \int_{\R} \overline{u} \partial_x u \right) + \int_{\R} nv.
\]
The Cauchy problem associated to \eqref{ZL} is globally well-posed in the energy space, \emph{i.e.}
for any initial data $(u_0, n_0 ,v_0) \in H^1( \R ; \C ) \times L^2( \R ; \R ) \times L^2( \R ; \R )$; see \cite{Bou,Gin,Sa}. 
We also recall the phase and translation invariances for the system \eqref{ZL}: if $(u , n , v )$ is a solution of~\eqref{ZL}, then, for any $\sigma$,$\gamma \in \R$, $(t,x)\mapsto(u(t , x- \sigma ) e^{i \gamma} , n(t , x- \sigma ) , v(t , x- \sigma ))$ is also a solution of~\eqref{ZL}.

As discussed in \cite[Eq. (1.10)]{Kiv}, for small solutions, the scalar Zakharov system can be seen as a perturbation of the one-dimensional cubic Schrödinger equation
\begin{equation}\label{nls}
i \partial_t u + \partial_x^2 u + |u|^2u =0.
\end{equation}
Indeed, if $(u,n,v)$ is a solution of \eqref{ZL}, then for any $\omega>0$, setting
\begin{equation}\label{eq:sg}
u(t,x)=\sqrt{\omega}\widetilde u(\omega t,\sqrt{\omega}x),\
n(t,x)={\omega}\widetilde n(\omega t,\sqrt{\omega}x),\
v(t,x)={\omega}\widetilde v(\omega t,\sqrt{\omega}x),
\end{equation}
the triple $(\widetilde u,\widetilde n,\widetilde v)$ satisfies
\begin{equation}\label{eq:zl}
\left\{\begin{aligned} i \partial_t \widetilde u 
 & = - \partial_x^2 \widetilde u - \widetilde n \widetilde u \\ 
 \sqrt{\omega}\partial_t \widetilde n &= -\partial_x \widetilde v\\
 \sqrt{\omega}\partial_t \widetilde v &= - \partial_x \widetilde n + \partial_x ( |\widetilde u|^2 ) 
 \end{aligned}\right.
\end{equation}
For $\omega>0$ small, the third line of the system formally implies that
$\widetilde n\approx |\widetilde u|^2$ which, inserted in the first line of the system,
says that $\widetilde u$ approximately satisfies the Schrödinger equation~\eqref{nls}.

The Zakharov system \eqref{ZL} admits standing solitary waves
(see for instance~\cite{Kiv,Oh,Wu}). For any $\omega>0$, set
\begin{equation}\label{eq:QQ}
\phi_\omega (x) = \sqrt{\omega} Q ( \sqrt{\omega} x )
\quad \mbox{where}\quad Q (y) = \frac{\sqrt{2}}{\text{cosh} (y)}
\quad \mbox{satisfies}\quad Q''+Q^3 = Q.
\end{equation}
Then, $(u , n , v)(t,x) = (e^{i \omega t} \phi_\omega(x) ,\phi_\omega^2(x),0)$ is a solution of~\eqref{ZL}.
Moreover, although the one-dimensional Zakharov system is not invariant
by any Galilean or Lorentz-type transformation, 
it admits a family of travelling waves, explicitly given by
\begin{equation}\label{eq:sl}
\left\{ \begin{aligned} u(t , x) &= \sqrt{1-\beta^2}\, \phi_\omega (x-\beta t)
\exp(i \Gamma(t,x))\\ 
n (t , x) &= \phi_\omega^2 (x-\beta t) \\
v (t , x) &= \beta \phi_\omega^2 (x-\beta t) \end{aligned} \right.
\end{equation}
for any $\omega > 0$, $\beta \in (-1,1)$, where
\[
\Gamma(t,x)= \omega t- \frac{\beta^2}4 t + \frac\beta2 x .
\]
As for the cubic Schrödinger equation \eqref{nls}, all the solitary waves of~\eqref{ZL} defined above are known to be orbitally stable in the energy space; see \cite{Oh,Wu}.

Now, we turn to the question of asymptotic stability of solitary waves.
In the general context of nonlinear Schrödinger equations,
we refer to \cite{BuPe,PKA} for pioneering works on the subject 
and to the reviews \cite{Cu2,Ge,Ma4,Ma5,Schlag}, for instance.
Here, we focus on certain one-dimensional models that are close to \eqref{nls}.
First, recall that solitary waves of the cubic equation \eqref{nls} are not asymptotically stable in the energy space (see the Introduction of \cite{Ma1}). However, asymptotic stability was proved in a refined topology
of weighted spaces, using tools from the integrability theory~\cite{CuPe} or using 
more general techniques involving the distorted Fourier transform~\cite{LiLu}.

Second, recent articles have addressed the question of asymptotic stability of solitary waves
for semilinear perturbations of \eqref{nls}, showing that such perturbations 
could significantly change the situation in the energy space.
For example, the small solitary waves of the model
\begin{equation}
i\partial_t u + \partial_x^2 u + |u|^2 u + g(|u|^2)u =0
\label{NLSg}
\end{equation}
are known to be locally asymptotically stable for a wide class of perturbations $g \not\equiv 0$,
which satisfy $g(s)=o(s)$ and $g\leqslant 0$ in a certain sense, in particular for $g(s) = - s^{q-1}$ for any $q>2$. We refer to \cite{Ma1} for the special case $g(s)=-s^2$ and to~\cite{GR1} for the general case.
Note that for such models, it is proved that there is no internal mode, i.e. there exists no non-trivial time-periodic solution of the linearised problem around the solitary wave.
The article \cite{CoGe} deals with more general situations where it is assumed that there
is no internal mode, and with a stronger notion of asymptotic stability
(full asymptotic stability \emph{versus} local asymptotic stability,
see \cite{Ge} for a discussion).

Lastly, for the model \eqref{NLSg}, in the case where $g \not\equiv 0$,
$g\geqslant 0$ in a certain sense, satisfies $g(s)=o(s)$, in particular, for $g(s) = s^{q-1}$ for any $q>2$,
the asymptotic stability of solitary waves was proved in \cite{Ma2} (for $g(s)=s^2$) and \cite{GR2}.
 In that case, we emphasize that the presence of an internal mode,
defined as a time-periodic solution of linearised equation around the solitary wave, makes the analysis considerably
more involved (see \cite{PKA} for a pioneering insight on such questions).
The situation is similar for the model
$i\partial_t u + \partial_x^2 u+ |u|^{p-1} u=0$, 
for $p\neq 3$ close to $3$,
for which an internal mode is also present, see \cite{CoGu,Cu1}.
Therefore, one can say that the case of semilinear perturbations of the integrable equation \eqref{nls} is now
rather well-understood.

As observed above, the Zakharov system \eqref{ZL} is also a perturbation of~\eqref{nls} for small solutions, even though 
of different nature.
It is thus natural to study the asymptotic stability of its solitary waves, 
starting with the potential issue of existence of internal modes.
Actually, the purpose of this paper is to prove that there exists no internal mode
for small solitary waves of \eqref{ZL}.
To state a precise result,
we linearise the system~\eqref{ZL} around a solitary wave of the form \eqref{eq:sl}, also changing space and time variables
to make the function $Q$ appear and to highlight the small parameter $\omega$.

For $\omega>0$ and $\beta\in (-1,1)$, we decompose a solution $(u,n,v)$ of \eqref{ZL}
around the travelling solitary wave defined in \eqref{eq:sl}, by setting
\begin{equation*}
\left\{
\begin{aligned}
 u(t , x) 
&=\sqrt{\omega} \sqrt{1-\beta^2}  (Q + U ) ( \omega t , \sqrt{\omega}( x-\beta t)) 
\exp(i\Gamma(t,x))\\
 n(t , x) 
&= \omega ( Q^2 + 2QU_1 + N) ( \omega t , \sqrt{\omega}( x-\beta t)) \\
 v(t , x) 
&= \omega ( \beta Q^2+2\beta QU_1 + V ) (\omega t , \sqrt{\omega}( x-\beta t))
\end{aligned}\right.
\end{equation*}
for unknown small functions $U(s , y)=U_1(s,y)+iU_2(s,y)$, $N(s , y)$ and $V(s , y)$.
Note that the presence of the term $2QU_1$ in the decompositions of $n$ and $v$ is natural
since $|Q+U|^2 = Q^2+ 2 QU_1+|U|^2$.
Define the operators $L_+$ and $L_-$ by
\begin{equation}\label{eq:LL}
 L_+=-\partial_y^2+1-3Q^2,\quad L_-= -\partial_y^2+1-Q^2,
\end{equation}
and recall the well-known property (\cite{We})
\[
\ker L_+=\SPAN (Q'),\quad 
\ker L_-=\SPAN (Q).
\]
Discarding nonlinear terms in $(U,N,V)$, we find a linear system for $(U_1,U_2,N,V)$
\begin{equation}
\left\{ \begin{aligned} \partial_s U_1 &= L_- U_2 \\ 
\partial_s U_2 &= - L_+ U_1 +QN \\
\sqrt{\omega} ( 2 Q \partial_s U_1 +\partial_s N ) &= \beta \partial_y N - \partial_y V \\ 
\sqrt{\omega} (2\beta Q \partial_s U_1 + \partial_s V )&= - \partial_y N + \beta \partial_y V
\end{aligned} \right.
\label{LinZ}
\end{equation}
By definition, an internal mode is a time-periodic solution $(U_1,U_2,N,V)$ of the linear system \eqref{LinZ}. The main result of the present article is the following. 

\begin{theorem}\label{thmPer}
Let $\beta \in (-1 \, , 1)$. If $\omega >0$ is sufficiently small then $(U_1 , U_2 , N , V) \in C^0 ( \R , H^1 ( \R )^2 \times L^2 ( \R )^2 )$ is a time-periodic solution of the system \eqref{LinZ} if and only if there exist $a_1,a_2 \in \R$ such that $U_1(s,y) = a_1 Q'(y)$, $U_2(s,y)=a_2Q(y)$ and $N=V=0$.
\end{theorem}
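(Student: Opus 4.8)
The plan is to reduce the statement to a spectral problem and then to treat the Zakharov coupling as a perturbation of the integrable cubic Schrödinger flow. Since the linear system \eqref{LinZ} is autonomous, a time-periodic solution $(U_1,U_2,N,V)\in C^0(\R,H^1(\R)^2\times L^2(\R)^2)$ of period $T$ may be expanded in Fourier series in $s$, and each Fourier coefficient $(\mathcal U_1,\mathcal U_2,\mathcal N,\mathcal V)(y)$ (now $\C$-valued, in $H^1(\R)^2\times L^2(\R)^2$) solves \eqref{LinZ} with $\partial_s$ replaced by a fixed $i\mu$, $\mu\in\frac{2\pi}{T}\Z$. It therefore suffices to prove: (i) for every $\omega>0$ the only such solutions with $\mu=0$ have $\mathcal U_1\in\SPAN(Q')$, $\mathcal U_2\in\SPAN(Q)$, $\mathcal N=\mathcal V=0$; and (ii) for $\omega$ small enough there is no nonzero such solution with $\mu\in\R\setminus\{0\}$ (by conjugation we may take $\mu>0$). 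Claim (i) needs no smallness: with $\partial_s=0$ the last two equations of \eqref{LinZ} give $\beta\mathcal N'-\mathcal V'=0$ and $-\mathcal N'+\beta\mathcal V'=0$; since $\beta^2\ne1$ this forces $\mathcal N'=\mathcal V'=0$, hence $\mathcal N=\mathcal V=0$ in $L^2$, and then the first two equations read $L_-\mathcal U_2=0$, $L_+\mathcal U_1=0$, so the conclusion follows from the description of $\ker L_\pm$ recalled above.

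For (ii), set $p=i\sqrt\omega\,\mu$. Using $\partial_s U_1=L_-U_2$, the last two equations of \eqref{LinZ} form a first-order linear ODE system for $(\mathcal N,\mathcal V)$ with source proportional to $Q\mathcal U_1$; taking the Fourier transform in $y$ and inverting the matrix $\bigl(\begin{smallmatrix}\beta&-1\\-1&\beta\end{smallmatrix}\bigr)$ (invertible since $\beta^2\ne1$) yields an explicit formula
\[
\widehat{\mathcal N}(\xi)=\frac{-2\sqrt\omega\,\mu}{1-\beta^2}\cdot\frac{(2\beta\xi-\sqrt\omega\,\mu)\,\widehat{Q\mathcal U_1}(\xi)}{(\xi-\xi_1)(\xi-\xi_2)},\qquad \xi_j=\frac{\sqrt\omega\,\mu}{\beta\mp1},
\]
and a similar one for $\widehat{\mathcal V}$. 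As $\widehat{Q\mathcal U_1}$ is continuous and $2\beta\xi_j-\sqrt\omega\,\mu\ne0$ for $\beta\in(-1,1)$, membership $\mathcal N,\mathcal V\in L^2$ forces the two resonance conditions $\widehat{Q\mathcal U_1}(\xi_1)=\widehat{Q\mathcal U_1}(\xi_2)=0$; conversely, under these conditions $\mathcal N=\mathcal N[\mathcal U_1]$ and $\mathcal V$ are well defined, lie in $H^1$, and are small: $\|\mathcal N\|+\|\mathcal V\|\lesssim\sqrt\omega\,(1+\mu)\,\|\mathcal U_1\|_{H^1}$, the poles at $\xi_1\ne\xi_2$ being removable (the only delicate point is near $\xi=0$, where $\xi_1<0<\xi_2$ straddle the origin at distance $O(\sqrt\omega\,\mu)$, and one uses the resonance conditions there as well). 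Combining the first two equations of \eqref{LinZ} then gives the closed equation
\[
(L_-L_+-\mu^2)\,\mathcal U_1=L_-\!\bigl(Q\,\mathcal N[\mathcal U_1]\bigr),
\]
whose right-hand side is localized (thanks to the factor $Q$) and of size $O(\sqrt\omega)\,\|\mathcal U_1\|_{H^1}$.

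It remains to show that this forces $\mathcal U_1=0$ for $\omega$ small, after which $\mathcal N=\mathcal V=0$ and then $\mathcal U_2=0$ follow immediately. Here one exploits the explicit spectral theory of the reflectionless operators $L_\pm$ for $Q=\sqrt2\,\sech$: $\sigma(L_-L_+)=\{0\}\cup[1,+\infty)$, with no eigenvalue in $(0,+\infty)$, no embedded eigenvalue, $\ker(L_-L_+)=\SPAN(Q',\,Q+yQ')$, and an explicitly computable resolvent away from the thresholds $\mu^2\in\{0,1\}$. For $\mu$ ranging in a fixed compact subset of $(0,1)\cup(1,+\infty)$, the operator $L_-L_+-\mu^2$ is boundedly invertible on $L^2$ when $\mu<1$, and injective on $L^2$ with a bounded inverse on suitable exponentially weighted spaces when $\mu>1$ (here $\mathcal U_1\in L^2$ together with the localized right-hand side forces exponential decay of $\mathcal U_1$); in both cases the equation becomes a contraction and $\mathcal U_1=0$. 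For $\mu$ large one rules out an $L^2$ solution by a Kato/Mourre-type absence-of-embedded-eigenvalue argument, robust under the small localized nonlocal perturbation $L_-(Q\,\mathcal N[\cdot])$. In the two remaining windows $\mu\approx0$ and $\mu\approx1$ (of width $O(\sqrt\omega)$), one argues directly: near $\mu=0$ one splits $\mathcal U_1$ along $\SPAN(Q',\,Q+yQ')$ and its complement, controls the complement by the inverse (now bounded on that complement), and controls the kernel component using the resonance conditions, which in the limit read $\int Q\mathcal U_1=\int yQ\mathcal U_1=0$ and annihilate $\SPAN(Q',\,Q+yQ')$; near $\mu=1$ one uses the explicit threshold resonance of $L_-L_+$ together with the localization of $L_-(Q\,\mathcal N[\mathcal U_1])$ and, again, the resonance conditions.

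The main obstacle is precisely this uniformity in the internal frequency $\mu$. For $\mu^2\ge1$ the candidate eigenvalue lies in the essential spectrum of $L_-L_+$, so $L_-L_+-\mu^2$ is not boundedly invertible on $L^2$ and one is forced to exclude an embedded eigenvalue or a near-threshold resonance quantitatively rather than by soft perturbation theory; moreover the fluid block carries the non-coercive $\omega^{-1/2}$-transport, which makes it essential to track exactly where the small parameter sits in every estimate. Making these bounds uniform across $\mu\in(0,+\infty)$, and in particular across the two shrinking windows, is the technical heart of the argument.
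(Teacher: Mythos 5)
Your reduction to the per-frequency system is correct and matches the paper's, and your explicit Fourier inversion of the fluid block reproduces the paper's Duhamel formula \eqref{Duha2}: your two complex resonance conditions $\widehat{Q\mathcal U_1}(\xi_j)=0$ at $\xi_j=\sqrt\omega\mu/(\beta\mp1)$ are exactly the four real identities \eqref{IMorthos}, and your size estimate $\|\mathcal N\|+\|\mathcal V\|\lesssim\sqrt\omega\,\mu\,\|\mathcal U_1\|$ is the paper's \eqref{estim1DT}--\eqref{estim2DT}. Where you diverge from the paper is in how you intend to close the scalar equation $(L_-L_+-\mu^2)\mathcal U_1=L_-(Q\mathcal N[\mathcal U_1])$: you propose a frequency-by-frequency perturbative/Fredholm analysis of $L_-L_+-\mu^2$, while the paper avoids any case split over $\mu$ (beyond the Pohozaev bound $\lambda\leqslant 5$ and the coercivity lower bound) by passing to the conjugated operator $M^2=(-\partial_y^2+1)^2$ via $S^2L_+L_-=M^2S^2$ and running a single virial argument on $W_2=S^2C_2$, $Z_2=S^2S_2$ that works uniformly for all $0<\lambda\leqslant 5$.

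The gap in your proposal sits precisely where you flag the ``technical heart'': the window $\mu\approx 1$. This is not a matter of making a soft perturbation argument quantitative. The unperturbed operator genuinely has a threshold resonance there (the paper's Remark~\ref{rk:3} gives it explicitly: $C_2=1$, $S_1=1-Q^2$, bounded but not in $L^2$), so $\|(L_-L_+-\mu^2)^{-1}\|_{L^2\to L^2}$ blows up as $\mu\to1$, and the $O(\sqrt\omega)$ smallness of the right-hand side does not by itself restore a contraction on an $O(\sqrt\omega)$-wide window. You assert that ``the resonance conditions'' will save the day near $\mu=1$, but you do not exhibit the mechanism. The paper's mechanism is Lemma~\ref{lemh}: after conjugation the transformed problem $M^2W_2=\lambda^2W_2+F_W$ has no repulsive potential, and the missing coercivity is supplied by the fluid-block relations \eqref{estim3S2}, \eqref{estim3C2}, which translate (via $(S^*)^2h=-(Q+2yQ')$) into smallness of $\int hW_2$, $\int hZ_2$, and then the coercivity inequality $\int Q^{1/2}w^2\lesssim(\int hw)^2+\int(w')^2$ closes the virial bound \eqref{viriel2}. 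Without an analogue of this observation, your near-threshold step is a plan, not a proof; the same applies, less severely, to your large-$\mu$ regime (you invoke Kato/Mourre for the unperturbed operator but need to control the nonlocal coupling $L_-(Q\mathcal N[\cdot])$; the paper's Pohozaev argument in \S\ref{S:3.up} for $\lambda\geqslant5$ handles the full coupled system directly and is worth comparing).

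Two smaller points worth recording. Your near-$\mu=0$ claim that the limiting conditions $\int Q\mathcal U_1=\int yQ\mathcal U_1=0$ ``annihilate $\SPAN(Q',Q+yQ')$'' needs the computation $\int yQQ'=-2\neq0$ and $\int Q(Q+yQ')=2\neq0$; it does work, but it is not automatic and should be stated. And in general your proposed split of $\mathcal U_1$ along $\ker(L_-L_+)$ plus complement must be done in a way that is compatible with the $H^1$-coercivity inequalities \eqref{4.spec}, whose natural orthogonality directions are $Q$, $yQ$, $\Lambda Q$ rather than $\ker(L_-L_+)$ itself; the paper's \S\ref{S:3.5} shows how the fluid-block relations give smallness of exactly those projections.
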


Using Fourier decomposition for time-periodic functions, we actually only need to investigate the unimodal case
\begin{equation*}
\left\{
\begin{aligned}
 U_1 (s , y) &= \cos ( \lambda s ) C_1 (y) + \sin ( \lambda s ) S_1 (y) \\
 U_2 (s , y) &= \cos ( \lambda s ) C_2 (y) + \sin ( \lambda s ) S_2 (y) \\
 N (s , y) &= \cos ( \lambda s ) C_N (y) + \sin ( \lambda s ) S_N (y) \\
V(s , y ) &= \cos ( \lambda s ) C_V (y) + \sin ( \lambda s ) S_V (y)
\end{aligned}\right.
\end{equation*}
(For $\lambda=0$, the functions $S_1$, $S_2$, $S_N$ and $S_V$ are useless and taken to $0$.)
Inserting this form into~\eqref{LinZ}, we find that the functions
 $C_1,$ $S_1$, $C_2$, $S_2$ and $C_N$, $S_N$, $C_V$, $S_V$ must satisfy
\begin{equation}\label{IM1}
\left\{ \begin{aligned} 
L_- C_2 &= \lambda S_1\\ 
L_+ S_1 &= \lambda C_2 + QS_N \\ 
L_- S_2 & = - \lambda C_1 \\ 
L_+ C_1 &= - \lambda S_2 + QC_N \end{aligned} \right.
\end{equation}
and
\begin{equation}\label{IM2}
\left\{ \begin{aligned} 
& C_V' - \beta C_N' = -\lambda \sqrt{\omega}(S_N + 2 Q S_1 ) \\
& S_V' - \beta S_N' = \lambda \sqrt{\omega}(C_N + 2 Q C_1 ) \\
& C_N' - \beta C_V' = - \lambda \sqrt{\omega}( S_V +2 \beta Q S_1 ) \\
& S_N' - \beta S_V' = \lambda \sqrt{\omega}(C_V + 2 \beta Q C_1 )
\end{aligned} \right.
\end{equation}
In the formulation \eqref{IM1}-\eqref{IM2}, 
the function $Q$, defined in \eqref{eq:QQ} and the operators $L_\pm$, defined in \eqref{eq:LL}, are fixed,
and thus one easily sees the influence of the various parameters:
the eigenvalue $\lambda$, the speed parameter $\beta\in (-1,1)$,
and the small parameter $\omega>0$, related to the size of the solitary wave in the original variables $(t,x)$.

Theorem \ref{thmPer} is a consequence of the fact that the only non-zero solutions of 
the system~\eqref{IM1}-\eqref{IM2}
are the trivial ones given by the respective kernel of the operators~$L_+$ and~$L_-$.

\begin{theorem}\label{thmIM}
Let $\beta \in (-1,1)$.
If $\omega >0$ is sufficiently small then the only solutions $(\lambda , C_1 , S_1 , C_2 , S_2 , C_N , S_N , C_V , S_V ) \in \R \times H^1 ( \R )^4 \times L^2 ( \R )^4$ of the system~\eqref{IM1}-\eqref{IM2} are
\begin{equation*}
\lambda = 0, \quad
C_1 \in \SPAN (Q') , \quad C_2 \in \SPAN (Q) , \quad
C_N = C_V = 0
\end{equation*}
and
\begin{equation*}
\lambda \in \R , \quad
C_1=S_1=C_2=S_2=C_N=S_N=C_V=S_V=0.
\end{equation*}
\end{theorem}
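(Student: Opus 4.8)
The plan is to reduce the coupled system \eqref{IM1}--\eqref{IM2} to a perturbation of the linearised eigenvalue problem for the cubic Schrödinger equation \eqref{nls}, and then to exploit the non-degeneracy of the latter. When $\omega=0$ the four equations in \eqref{IM2} become $C_V'-\beta C_N'=S_V'-\beta S_N'=C_N'-\beta C_V'=S_N'-\beta S_V'=0$; since $\beta^2<1$, the only solution with $(C_N,C_V,S_N,S_V)\in L^2(\R)^4$ is $C_N=C_V=S_N=S_V=0$, and then \eqref{IM1} is exactly the linearised system for \eqref{nls}, whose only $H^1$ solution is $\lambda=0$ together with $C_1\in\ker L_+$, $C_2\in\ker L_-$. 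The whole point is to show that this persists for $\omega>0$ small, uniformly in the other parameters.

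\emph{Step 1: solving \eqref{IM2}.} I would first express $(C_N,C_V,S_N,S_V)$ in terms of $(C_1,S_1)$. The combinations $C_N\pm C_V$ and $S_N\pm S_V$ decouple \eqref{IM2} into two $2\times2$ first-order systems; writing $a=\lambda\sqrt{\omega}/(1-\beta)$, $b=\lambda\sqrt{\omega}/(1+\beta)$ and complexifying, $Z:=(C_N+C_V)+i(S_N+S_V)$ solves the scalar ODE $Z'=iaZ+2ia(1+\beta)Q\,(C_1+iS_1)$, and $\widetilde Z:=(C_N-C_V)+i(S_N-S_V)$ solves the analogous equation with $-b$ in place of $a$. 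Integrating from $-\infty$ and demanding $Z,\widetilde Z\in L^2(\R)$ (that is, no non-decaying oscillatory tail at $\pm\infty$) yields, on one hand, the two orthogonality conditions
\[
\int_\R e^{-iay}Q(y)\bigl(C_1+iS_1\bigr)(y)\,dy=\int_\R e^{iby}Q(y)\bigl(C_1+iS_1\bigr)(y)\,dy=0,
\]
and on the other hand explicit integral formulas for $(C_N,C_V,S_N,S_V)$ which, using the exponential decay of $Q$, give $\|(C_N,C_V,S_N,S_V)\|_{L^2}\leqslant C(\beta)\,|\lambda|\sqrt{\omega}\,\|(C_1,S_1)\|_{L^2}$ (for $\lambda=0$ one gets directly $C_N=C_V=S_N=S_V=0$).

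\emph{Step 2: the reduced problem.} Inserting these into \eqref{IM1} and setting $\mathcal U=C_1+iS_1$, $\mathcal V=C_2+iS_2$, $\mathcal N=C_N+iS_N$, the system becomes $L_-\mathcal V=-i\lambda\mathcal U$ and $L_+\mathcal U=i\lambda\mathcal V+Q\mathcal N$, hence $L_+L_-\mathcal V=\lambda^2\mathcal V-i\lambda Q\mathcal N$, where $\mathcal N$ is a nonlocal, $\lambda$- and $\omega$-dependent linear function of $\mathcal U$ with $\|\mathcal N\|_{L^2}\lesssim|\lambda|\sqrt\omega\,\|\mathcal U\|_{L^2}$, while $\|\mathcal U\|_{L^2}= |\lambda|^{-1}\|L_-\mathcal V\|_{L^2}\lesssim|\lambda|^{-1}\|\mathcal V\|_{H^2}$. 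I would then invoke the classical non-degeneracy of \eqref{nls}: since $Q=\sqrt2\,\sech$ makes $L_\pm$ reflectionless, $\sigma(L_+L_-)=\{0\}\cup[1,+\infty)$ with no eigenvalue embedded in $[1,+\infty)$, so that \eqref{nls} has no internal mode. Quantitatively, away from a fixed neighbourhood of $\{0\}\cup\{\pm1\}$ this furnishes a lower bound on $\|(L_+L_--\lambda^2)\mathcal V\|$ incompatible, for $\omega$ small, with the above estimate on the perturbation, forcing $\mathcal V=\mathcal U=\mathcal N=0$ and then $C_N=C_V=S_N=S_V=0$. Near $\lambda=0$ a Lyapunov--Schmidt reduction onto $\ker L_+\oplus\ker L_-$ is needed, the resulting finite-dimensional equation having $\lambda=0$ as its only solution; unwinding \eqref{IM1} at $\lambda=0$ then gives exactly $C_1\in\SPAN(Q')$, $C_2\in\SPAN(Q)$, $C_N=C_V=0$.

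\emph{Main obstacle.} The delicate point is that the reduction of \eqref{IM2} produces a perturbation of size $O(|\lambda|\sqrt\omega)$ rather than $O(\sqrt\omega)$, so smallness is not automatic when $\lambda$ is large; moreover, for $|\lambda|\geqslant1$ the number $i\lambda$ already lies in the essential spectrum (the full coupled operator has no spectral gap), so the regimes $|\lambda|$ large and $|\lambda|$ near the threshold $1$ are where the work concentrates. I expect these to be settled either by first establishing an a priori bound $|\lambda|\lesssim1$, or by a direct argument on \eqref{IM1}--\eqref{IM2}---virial/commutator estimates, or an explicit computation of the relevant Wronskian (Jost determinant), available because $L_\pm$ are reflectionless---showing that the matching conditions for an $L^2$ solution cannot be met when $\lambda\neq0$; the remaining bounded range of $\lambda$ is then closed by the perturbative and Lyapunov--Schmidt arguments sketched above.
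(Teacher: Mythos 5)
Your Step~1 --- decoupling the subsystem \eqref{IM2} via the combinations $C_N\pm C_V$, $S_N\pm S_V$, integrating the resulting first-order ODEs, and extracting both the orthogonality relations of the type $\int e^{\mp i\varepsilon y/(1\pm\beta)}Q(C_1\mp iS_1)\,dy=0$ and an $O(\lambda\sqrt\omega)$ estimate on $(C_N,C_V,S_N,S_V)$ in terms of $(C_1,S_1)$ --- is essentially identical to \S\ref{S:3.2} of the paper, which diagonalises the matrix $\mathbf{A}$ and uses Duhamel's formula. No issue there.

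The genuine gap is in Step~2, and it concerns $\lambda$ near $\pm 1$. You invoke a quantitative lower bound on $(L_+L_--\lambda^2)$ ``away from a fixed neighbourhood of $\{0\}\cup\{\pm1\}$'' and a Lyapunov--Schmidt reduction at $\lambda=0$, but nothing you sketch covers a neighbourhood of $\lambda=1$. The observation that $L_+L_-$ has no $L^2$-eigenvalue embedded in $[1,\infty)$ does not help: as recalled in Remark~\ref{rk:3}, the unperturbed cubic-NLS linearisation has a threshold \emph{resonance} at $\lambda=1$ (the bounded, non-$L^2$ solution $C_2=1$, $S_1=1-Q^2$), so the resolvent $(L_+L_--\lambda^2)^{-1}$ is not bounded uniformly as $\lambda\to1$, and a perturbative/fixed-point argument with an $O(\lambda\sqrt\omega)$ forcing does not close in that regime. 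Your ``main obstacle'' paragraph names the symptom and lists virial or Wronskian computations as candidate remedies, but does not supply the mechanism that actually kills the resonance. In the paper the key point is the following: after confining $\lambda$ to a compact subset of $(0,\infty)$ (\S\ref{S:3.up}--\ref{S:3.5}), the factorisation $S^2L_+L_-=M^2S^2$ reduces the problem to $M^2W_2=\lambda^2W_2+F_W$ with $W_2=S^2C_2$; the image of the NLS resonance is the constant function, and a bare localised virial estimate cannot exclude it, since $M^2$ has no repulsive potential. What saves the argument is the \emph{extra} almost-orthogonality relation \eqref{estim3C2}, which comes specifically from the Zakharov subsystem \eqref{IM2} and not from the NLS structure; via $(S^*)^2h=-(Q+2yQ')$ (Lemma~\ref{lemh}) it yields $|\int hW_2|\lesssim\varepsilon\,(\cdots)$, which forbids $W_2$ from concentrating on constants and, combined with the coercivity inequality of Lemma~\ref{lemh}, closes the virial estimate. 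This is the structural input your sketch is missing, and without it the near-threshold regime --- the crux of the theorem --- remains open.
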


\begin{remark}\label{rk:1bis}
In the special case $\beta=0$,
the system \eqref{IM1}-\eqref{IM2} splits into two identical decoupled systems satisfied by 
$(\lambda,S_1,C_2,S_N)$ and $(-\lambda,C_1,S_2,C_N)$, of the form
\begin{equation*}
 \left\{\begin{aligned}
 L_-C_2 & = \lambda S_1\\
 L_+S_1 & = \lambda C_2+QS_N\\
 S_N'' + \lambda^2 \omega S_N& = - 2 \lambda^2 \omega QS_1
 \end{aligned}\right.
\end{equation*}
Restricted to $\beta=0$, the proof of Theorem~\ref{thmIM} would be algebraically 
simpler, but it would follow the same steps.
\end{remark}
\begin{remark}\label{rk:2}
From the proof of Theorem~\ref{thmIM}, it follows that 
for any $\beta\in (-1,1)$ and for $\omega>0$ sufficiently small,
there exists no nontrivial solution of \eqref{IM1}-\eqref{IM2} with $\lambda=1$ and such that
\[
C_1,S_1,C_2 , S_2 , C_N , S_N , C_V , S_V\in L^\infty,\quad
C_1',S_1',C_2' , S_2' , C_N' , S_N' , C_V' , S_V'\in L^2.
\]
This means that there exists no resonance at the edge of the
continuous spectrum. See \S\ref{prk} for a justification.
Note that this is in contrast with
the cubic Schrödinger equation \eqref{nls}, for which a resonance is known
(see Remark~\ref{rk:3} below). 
Therefore, 
the Zakharov system, seen as a perturbation of the cubic 
Schrödinger equation~\eqref{nls}, for small solitary waves, 
makes the resonance disappear and no internal mode emerge.
This favorable spectral property regarding the
asymptotic stability is thus similar to that of equation \eqref{NLSg} 
for general non-zero negative perturbations $g$ treated in \cite{Ma1,GR1},
and should play a role in any attempt to address the question of the
asymptotic stability of the small solitary waves of \eqref{ZL}.

We refer to \cite{Mar} for related asymptotic results for the evolution system (1) under a symmetry assumption on the solution.
In summary, the results  in \cite{Mar} show the local asymptotic stability of the zero solution in various space-time regions.
\end{remark}

Before proving Theorem \ref{thmIM} in the rest of this article,
we check that it formally implies Theorem \ref{thmPer}.
We refer to the Appendix for a complete proof of this fact.

Let $(U_1,U_2,N,V) \in C^0 (\R , H^1 ( \R )^2 \times L^2 ( \R )^2)$ be a $T$-periodic solution of system~\eqref{LinZ}. We use a Fourier decomposition in the time variable
\begin{align*}
U_1 (s,y) &= \sum\limits_{n=0}^{+ \infty} \left ( \cos ( \lambda_n s ) C_1^{(n)} (y) + \sin ( \lambda_n s ) S_1^{(n)} (y) \right ) \\
U_2 (s,y) &= \sum\limits_{n=0}^{+ \infty} \left ( \cos ( \lambda_n s ) C_2^{(n)} (y) + \sin ( \lambda_n s ) S_2^{(n)} (y) \right ) \\
N(s,y) &= \sum\limits_{n=0}^{+ \infty} \left ( \cos ( \lambda_n s ) C_N^{(n)} (y) + \sin ( \lambda_n s ) S_N^{(n)} (y) \right ) \\
V(s,y) &= \sum\limits_{n=0}^{+ \infty} \left ( \cos ( \lambda_n s ) C_V^{(n)} (y) + \sin ( \lambda_n s ) S_V^{(n)} (y) \right )
\end{align*}
where $\lambda_n = \frac{2 \pi n}{T}$ and $S_1^{(0)}=S_2^{(0)}=S_N^{(0)}=S_V^{(0)}=0$.

Inserting formally 
this expansion into the system \eqref{LinZ},
we find that, for all $n \geqslant 0$, the tuple $(\lambda_n , C_1^{(n)} , S_1^{(n)} , C_2^{(n)} , S_2^{(n)} , C_N^{(n)} , S_N^{(n)} , C_V^{(n)} , S_V^{(n)})$ satisfies the system~\eqref{IM1}-\eqref{IM2}. 
For $n \neq 0$, it follows from Theorem \ref{thmIM} that $C_1^{(n)} = S_1^{(n)} = C_2^{(n)} = S_2^{(n)} = C_N^{(n)} = S_N^{(n)} = C_V^{(n)} = S_V^{(n)} = 0$.
For $n=0$, it follows from Theorem \ref{thmIM} that $C_1^{(0)}  \in \text{span} (Q')$, $C_2^{(0)}  \in \text{span} (Q)$ and $C_N^{(0)}  = C_V^{(0)}  = 0$. Hence, $U_1 (s) = C_1^{(0)} \in \text{span} (Q')$, $U_2 (s) = C_2^{(0)}  \in \text{span} (Q)$ and $N(s,y)=V(s,y)=0$.

\subsection*{Notation} We denote $\langle f , g \rangle = \text{Re} \int_{\R} f \overline{g}$ and we use the notation $\| \cdot \|$ for the $L^2$-norm. The letter $C$ will denote various positive constants, independent of $s$, $y$, $\omega$, $\beta$ and $\lambda$, whose expression may change from one line to another; if needed, $C'$ and $C''$ will denote additional constants.
We will also use the notation $A\lesssim B$ when the inequality
$A\leqslant CB$ holds for such a constant $C$.
 
\section{Basic spectral properties}
 We recall from \cite{We} the following positivity properties,
for any $f\in H^1(\R)$,
\begin{equation}\label{4.spec}
\begin{aligned}
\langle L_+ f, f \rangle & \geqslant
C \| f\|_{H^1}^2 - C' \left( \langle f , Q \rangle^2 + \langle f , yQ \rangle^2 \right) \\
\langle L_- f , f \rangle & \geqslant
C \|f\|_{H^1}^2 - C' \langle f , \Lambda Q \rangle^2 
\end{aligned}
\end{equation}
where we have defined the function $\Lambda Q = \frac{1}{2} (Q+yQ')$.

Define the following operators
\[
S = \partial_y - \frac{Q'}{Q},\quad
S^* = - \partial_y - \frac{Q'}{Q},\quad 
M = - \partial_y^2 + 1.
\]
It is standard to observe that $L_-=S^*S$.
We also recall a factorisation property from \cite[Lemma 2]{Ma1} (see also \cite{CGNT})
\begin{equation}
S^2 L_+ L_- = M^2 S^2.
\label{facto}
\end{equation}
This factorisation will enable us to pass from a problem formulated in terms
of $L_\pm$ to a transformed problem involving the operator $M$ only.
Being without potential and having a trivial kernel, the operator $M$ is simpler to analyse by
virial arguments.
\begin{remark}\label{rk:3}
We look for the NLS limit $\omega\downarrow 0$ in the system \eqref{IM1}-\eqref{IM2}, \emph{i.e.}
\begin{equation*}
\left\{ \begin{aligned} 
L_- C_2 & = \lambda S_1 \\ 
L_+ S_1 & = \lambda C_2 + QS_N \\ 
L_- S_2 & = -\lambda C_1 \\ 
L_+ C_1 & = -\lambda S_2 + QC_N
\end{aligned} \right. \qquad 
\left\{ \begin{aligned} 
& (C_V -\beta C_N)' = 0 \\
& (S_V -\beta S_N)' = 0 \\
& (C_N -\beta C_V)' = 0 \\
& (S_N -\beta S_V)' = 0
\end{aligned} \right.
\end{equation*}
This leads to $C_N=C_V=S_N=S_V=0$ and to the two independent systems
\begin{equation*}
\left\{ \begin{aligned} 
L_+ S_1 &= \lambda C_2 \\
L_- C_2 &= \lambda S_1 
\end{aligned} \right.\qquad
\left\{ \begin{aligned} 
L_+ C_1 &= - \lambda S_2 \\
L_- S_2 & = - \lambda C_1 
\end{aligned} \right.\label{IM9}
\end{equation*}
The only non-trivial solutions $(\lambda,C_1,S_1,C_2,S_2)
\in \R\times H^1(\R)^4$
are $\lambda=0$, $C_1,S_1 \in \SPAN (Q')$, $C_2,S_2 \in \SPAN (Q)$.
However, there exists a resonance for $\lambda=1$ (see \cite{CoGu}),
\[
S_1= \mu_1(1-Q^2) ,\quad C_2= \mu_1,\quad S_2= \mu_2(1-Q^2) ,\quad C_1=\mu_2.
\]
Indeed, note that the first system with $\lambda=1$ gives $L_+L_- C_2= C_2$.
By \eqref{facto} and setting $W_2=S^2C_2$, this yields 
$M^2 W_2= W_2$ and thus $W_2=1$ (up to a multiplicative constant).
Then, $S^2 1=1$ says that $C_2=1$ (up to a multiplicative constant
and up to the explicit kernel).
Lastly, using the system again, we have $S_1=L_- 1 = 1-Q^2$.
\end{remark}

For future use, we define an auxiliary function $h$.
\begin{lemma} \label{lemh}
Define the function $h:\R\to(0,+\infty)$ by
\[
h(y) = \frac{1}{Q(y)} \int_y^{+ \infty} zQ^2(z)\, \textnormal{d}z.
\]
It holds 
\begin{itemize}
\item For all $y \in \R$, $0<h(y) \leqslant C (1+|y|)Q(y)$.
\item $(S^*)^2 h = -(Q+2yQ')$
\item For all $w \in H^1 (\R)$, 
\[ \int_{\R} Q^{\frac{1}{2}} w^2 \lesssim \left( \int_{\R} hw \right)^2 + \int_{\R} (w')^2. \]
\end{itemize}
\end{lemma}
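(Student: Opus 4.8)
The plan is to establish the three items essentially independently: the first two by direct computation from the explicit formula for $h$, and the third — the only substantive one — by a Poincaré-type estimate that uses the positivity and integrability of $h$ to control the one mode that the Dirichlet energy $\int_\R(w')^2$ fails to see.

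For the positivity and the pointwise bound, I would first note that $z\mapsto zQ^2(z)$ is odd, so $\int_{-y}^{y}zQ^2=0$ and hence $\int_y^{+\infty}zQ^2(z)\,\mathrm{d}z=\int_{|y|}^{+\infty}zQ^2(z)\,\mathrm{d}z>0$ for every $y$; this gives $h>0$ (and, incidentally, that $h$ is even). For the upper bound I would use the elementary inequalities $\sqrt2\,e^{-|y|}\leqslant Q(y)\leqslant 2\sqrt2\,e^{-|y|}$, which follow at once from $Q=\sqrt2\,\sech$: they yield $\int_{|y|}^{+\infty}zQ^2(z)\,\mathrm{d}z\leqslant 8\int_{|y|}^{+\infty}ze^{-2z}\,\mathrm{d}z\leqslant C(1+|y|)e^{-2|y|}$, whence $h(y)\leqslant C(1+|y|)e^{-2|y|}/Q(y)\leqslant C(1+|y|)e^{-|y|}\leqslant C(1+|y|)Q(y)$. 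For the identity, I would use the factorised form $S^*f=-\tfrac1Q(Qf)'$ together with $Qh=\int_y^{+\infty}zQ^2$, so that $(Qh)'=-yQ^2$ and therefore $S^*h=yQ$; applying $S^*$ once more, $S^*(yQ)=-\tfrac1Q(yQ^2)'=-\tfrac1Q(Q^2+2yQQ')=-(Q+2yQ')$, as claimed.

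The third item is where the content lies, and it is the step I expect to be the main obstacle. The difficulty is that $\int_\R Q^{1/2}w^2$ cannot be controlled by $\int_\R(w')^2$ alone — a wide, slowly varying bump has small Dirichlet energy but need not be small where $Q^{1/2}$ is supported — so the term $\big(\int_\R hw\big)^2$ must be used to recover the ``mean'' of $w$. I would argue as follows. For every $y$, write $w(y)=w(0)+\int_0^y w'$, so that $|w(y)-w(0)|\leqslant|y|^{1/2}\|w'\|$ by Cauchy--Schwarz. Inserting this into $\int_\R hw=w(0)\int_\R h+\int_\R h\,\big(w(y)-w(0)\big)\,\mathrm{d}y$ and using that $\int_\R h\in(0,+\infty)$ and $\int_\R h\,|y|^{1/2}<+\infty$ (both finite by the first item), I obtain $|w(0)|\leqslant C\big(\big|\int_\R hw\big|+\|w'\|\big)$. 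Then for every $y$, $|w(y)|\leqslant|w(0)|+|y|^{1/2}\|w'\|\leqslant C\big(\big|\int_\R hw\big|+(1+|y|)^{1/2}\|w'\|\big)$; squaring, multiplying by $Q^{1/2}$ and integrating — using the finiteness of $\int_\R Q^{1/2}$ and $\int_\R(1+|y|)Q^{1/2}$, which again follow from the decay of $Q$ — yields $\int_\R Q^{1/2}w^2\lesssim\big(\int_\R hw\big)^2+\int_\R(w')^2$ with absolute constants, as required.

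In summary, the first two items are short computations and the only genuinely delicate point is the third; even there the argument is a routine Hardy/Poincaré-type estimate once one isolates the single obstruction — the (near-)constant mode of $w$ — which is exactly what $\int_\R hw$ pins down, the exponential decay of $Q$ then guaranteeing convergence of every integral involved.
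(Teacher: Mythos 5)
Your proposal is correct and follows essentially the same route as the paper: the first two items by the same direct computations (evenness plus the exponential bounds on $Q$, and the identity $(S^*)^2h=\tfrac1Q(Qh)''$ via $(Qh)'=-yQ^2$), and the third by the same Poincaré-type argument in which $\int_\R hw$ controls the constant mode — the only cosmetic difference being that you anchor at $w(0)$ whereas the paper averages $w(y)=w(z)+\int_z^yw'$ against $h(z)\,\mathrm{d}z$. No gaps.
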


\begin{proof} 
It will not be used, but the function $h$ has the following explicit expression
$h = \frac{1}{Q} \left( 3 \ln 2 -2 \ln Q + 2yQ'/{Q} \right)$.

First, 
for $y \geqslant 0$, $h(y) \lesssim e^y \int_y^{+ \infty} ze^{-2z} \, \text{d}z \lesssim (1+y) e^{-y} 
\lesssim (1+y) Q(y)$. Moreover, $h$ is even. For the second point, we have
\[ (S^*)^2 h = \frac{1}{Q} (Qh)'' = \frac{1}{Q} (-yQ^2)' = -(Q+2yQ'). \]
Now let us prove the third point of the lemma. Take $w \in H^1 ( \R )$ and begin with $w(y) = w(z) + \int_z^y w'$. Multiplying by $h(z)$ and integrating in $z\in\R$, it follows that
\[
w(y) \int_{\R} h = \int_{\R} hw + \int_{\R} h(z) \left( \int_z^y w' \right) \text{d}z.
\]
Thus,
\begin{align*} 
w^2(y) & \lesssim \left( \int_{\R} hw \right)^2 + \left( \int_{\R} h(z) \left( \int_z^y w' \right) \, \text{d}z \right)^2 \\
& \lesssim \left( \int_{\R} hw \right)^2 + \left( \int_{\R} |h(z)| \left( |y|^{\frac{1}{2}} + |z|^{\frac{1}{2}} \right) \text{d}z \right)^2 \int_{\R} (w')^2 \\
 & \lesssim \left( \int_{\R} hw \right)^2 + (1+|y|) \int_{\R} (w')^2
\end{align*}
Multiplying the above inequality by $Q^{\frac{1}{2}} (y)$ and integrating in $y\in \R$,
we obtain the inequality.
Note that the property proved above is not specific to the choice of the function $h$
and holds for any function with sufficient decay and a non zero integral.
\end{proof}

\section{Proof of non-existence of internal mode}

We observe that for any solution of \eqref{IM1}-\eqref{IM2} in the sense of distributions, 
assuming for example that $ C_1 , S_1 , C_2 , S_2 , C_N , S_N , C_V , S_V\in L^2(\R)$,
and using the system of equations, we obtain that $ C_1 , S_1 , C_2 , S_2 , C_N , S_N , C_V , S_V\in H^s(\R)$
for any $s\geqslant 0$.
We consider such a non trivial solution of  \eqref{IM1}-\eqref{IM2}.

\subsection{Almost orthogonality and resolution of a subsystem}\label{S:3.2}

We show here that the subsystem \eqref{IM2} provides pseudo-orthogonality relations that will be helpful in order to analyse the subsystem \eqref{IM1}.
First, we observe that the subsystem \eqref{IM2} is equivalent to
\begin{equation}\label{IM3}
\begin{pmatrix} C_V' \\ S_V' \\ C_N' \\ S_N' \end{pmatrix} 
= \varepsilon \mathbf{A} \begin{pmatrix} C_V \\ S_V \\ C_N \\ S_N \end{pmatrix}
+\varepsilon \kappa \begin{pmatrix} -QS_1 \\ QC_1 \\ -\gamma QS_1 \\ \gamma QC_1 \end{pmatrix}
\end{equation}
where 
\[
\varepsilon = \lambda \sqrt{\omega},\quad 
\gamma = \frac{2\beta}{1+\beta^2}\in (-1,1), \quad \kappa=\frac{2 (1+\beta^2 )}
{1- \beta^2}
\]
and
\[
\mathbf{A} = \frac{1}{1-\beta^2} \begin{pmatrix} 0 & - \beta & 0 & -1 \\ \beta & 0 & 1 & 0 \\ 0 & -1 & 0 & -\beta \\ 1 & 0 & \beta & 0 \end{pmatrix}. 
\]
The matrix $\mathbf{A} $ has four imaginary eigenvalues (counted with multiplicity): $ \pm\frac{i }{1 \pm \beta}$ and $\mathbf{A} = \mathbf{P} \mathbf{D}\mathbf{P}^{-1}$ where
\begin{align*}
& \mathbf{P} = \begin{pmatrix}i & -i & i & -i \\ -1 & -1 & 1 & 1 \\ -i & i & i & -i \\ 1 & 1 & 1 & 1 \end{pmatrix} , \quad \mathbf{D} = \begin{pmatrix} \frac{i }{1+\beta} & 0 & 0 & 0 \\ 0 & - \frac{i }{1 + \beta} & 0 & 0 \\ 0 & 0 & \frac{i }{1-\beta} & 0 \\ 0 & 0 & 0 & - \frac{i }{1- \beta} \end{pmatrix}.
\end{align*}
Set
\[
\begin{pmatrix} Y_1 \\ Y_2 \\ Y_3 \\ Y_4 \end{pmatrix} = \mathbf{P}^{-1} \begin{pmatrix} C_V \\ S_V \\ C_N \\ S_N \end{pmatrix}
\]
so that the system \eqref{IM3} and the diagonalisation of $\mathbf{A}$ lead to
\begin{equation}
\begin{split}
\begin{pmatrix} Y_1 ' \\ Y_2 ' \\ Y_3 ' \\ Y_4 ' \end{pmatrix} 
&= \varepsilon \mathbf{D} \begin{pmatrix} Y_1 \\ Y_2 \\ Y_3 \\ Y_4 \end{pmatrix}
+ \varepsilon\kappa \mathbf{P}^{-1} \begin{pmatrix} -QS_1 \\ QC_1 \\ -\gamma QS_1 \\\gamma QC_1 \end{pmatrix} \\
&= \varepsilon \begin{pmatrix} \frac{i }{1+\beta} Y_1 \\[4pt] -\frac{i }{1+\beta}Y_2 \\[4pt] \frac{i }{1-\beta} Y_3 \\[4pt] -\frac{i }{1-\beta} Y_4 \end{pmatrix} 
+ \frac{\varepsilon \kappa}{4} \begin{pmatrix} -(1- \gamma ) Q (C_1-iS_1) \\ -(1- \gamma ) Q(C_1+iS_1) \\ (1+ \gamma ) Q(C_1 + iS_1) \\ (1+ \gamma ) Q(C_1 -iS_1) \end{pmatrix}.
\end{split}
\label{IM4}
\end{equation}
We observe that
\[ 
 e^{- \frac{i \varepsilon y}{1+ \beta}} \left( Y_1 ' - \frac{i \varepsilon}{1+\beta} Y_1 \right)
= \frac{\text{d}}{\text{d} y} \left( e^{- \frac{i \varepsilon y}{1+\beta}} Y_1\right)
\]
and thus, by the first line of system \eqref{IM4}
and $Y_1\in H^1(\R)$, we obtain
\[
\int_{\R} e^{- \frac{i \varepsilon y}{1+\beta}} Q (C_1 -iS_1) \, \text{d}y =0.
\]
Taking the real and imaginary parts of the above identity yields
\begin{align*}
& \int_{\R} \cos \left( \frac{\varepsilon y}{1+\beta} \right) Q(y) C_1(y) \, \text{d}y - \int_{\R} \sin \left( \frac{\varepsilon y}{1+\beta} \right) Q(y) S_1 (y) \, \text{d}y = 0 \\
& \int_{\R} \cos \left( \frac{\varepsilon y}{1+\beta} \right) Q(y) S_1(y) \, \text{d}y + \int_{\R} \sin \left( \frac{\varepsilon y}{1+ \beta} \right) Q(y) C_1 (y) \, \text{d}y = 0.
\end{align*}
Using the second line of system \eqref{IM4} yields the same relations, while the third and fourth lines give two other relations. 
We gather below the four relations obtained
\begin{equation}
\begin{aligned}
& \int_{\R} \cos \left( \frac{\varepsilon y}{1+\beta} \right) QC_1 \, \text{d}y = \int_{\R} \sin \left( \frac{\varepsilon y}{1+\beta} \right) QS_1 \, \text{d}y \\
& \int_{\R} \sin \left( \frac{\varepsilon y}{1+ \beta} \right) QC_1 \, \text{d}y = - \int_{\R} \cos \left( \frac{\varepsilon y}{1+\beta} \right) QS_1 \, \text{d}y \\
& \int_{\R} \cos \left( \frac{\varepsilon y}{1-\beta} \right) QC_1 \, \text{d}y = - \int_{\R} \sin \left( \frac{\varepsilon y}{1- \beta} \right) QS_1 \, \text{d}y \\
 & \int_{\R} \sin \left( \frac{\varepsilon y}{1-\beta} \right) QC_1 \, \text{d}y = \int_{\R} \cos \left( \frac{\varepsilon y}{1-\beta} \right) QS_1 \, \text{d}y.
\end{aligned}
\label{IMorthos}
\end{equation}
Moreover, system \eqref{IM3} yields an explicit expression for $C_V$, $S_V$, $C_N$ and $S_N$ in terms of $S_1$ and $C_1$, which we establish now. In what follows, we use the following condensed notation: 
\begin{align*}
& s^{\pm} (y) = \frac{1}{2} \left( \sin \left( \frac{\varepsilon y}{1 + \beta} \right) \pm \sin \left( \frac{\varepsilon y}{1-\beta} \right) \right), \\
& c^{\pm} (y) = \frac{1}{2} \left( \cos \left( \frac{\varepsilon y}{1 + \beta} \right) \pm \cos \left( \frac{\varepsilon y}{1-\beta} \right) \right), \\
& s_\gamma^{\pm} (y) = \frac{1- \gamma}{2} \sin \left( \frac{\varepsilon y}{1+\beta} \right) \pm \frac{1 + \gamma}{2} \sin \left( \frac{\varepsilon y}{1-\beta} \right), \\
 & c_\gamma^{\pm} (y) = \frac{1- \gamma}{2} \cos \left( \frac{\varepsilon y}{1+\beta} \right) \pm \frac{1 + \gamma}{2} \cos \left( \frac{\varepsilon y}{1-\beta} \right).
\end{align*}
We compute
\[ \exp \left( \varepsilon y \mathbf{A} \right) = \mathbf{P} e^{ \varepsilon y \mathbf{D}} \mathbf{P}^{-1} = \begin{pmatrix} c^+ & s^- & -c^- & -s^+ \\ -s^- & c^+ & s^+ & -c^- \\ -c^- & -s^+ & c^+ & s^- \\ s^+ & -c^- & -s^- & c^+ \end{pmatrix}:=\mathbf{M}. \]
Solving \eqref{IM3} via Duhamel's formula leads to
\begin{equation}
\begin{pmatrix} C_V \\ S_V \\ C_N \\ S_N \end{pmatrix} (y) = 
\mathbf{M}(y) \mathbf{X}_0 + \varepsilon \kappa \mathbf{M}(y)\int_0^y Q(z) \begin{pmatrix} -c_\gamma^+ S_1 - s_\gamma^- C_1 \\[2pt] -s_\gamma^- S_1 +c_\gamma^+ C_1 \\[2pt] c_\gamma^- S_1 + s_\gamma^+ C_1 \\[2pt] s_\gamma^+ S_1 - c_\gamma^- C_1 \end{pmatrix} (z) \, \text{d}z 
\label{Duha1}
\end{equation}
where $\mathbf{X}_0 \in \R^4$ is some constant vector. Since $Q$, $S_1$ and $C_1$ are $L^2$ functions, and trigonometric functions are bounded, the integral on the right-hand side converges. Studying \eqref{Duha1} when $y \to + \infty$ and knowing that $C_V$, $S_V$, $C_N$ and $S_N$ belong to $L^2$, it follows that
\[ \mathbf{X}_0 + \varepsilon \kappa\int_0^{+ \infty} Q(z) \begin{pmatrix} -c_\gamma^+ S_1 - s_\gamma^- C_1 \\[2pt] -s_\gamma^- S_1 +c_\gamma^+ C_1 \\[2pt] c_\gamma^- S_1 + s_\gamma^+ C_1 \\[2pt] s_\gamma^+ S_1 - c_\gamma^- C_1 \end{pmatrix} (z) \, \text{d}z = 0. \]
This leads to
\begin{equation}\label{Duha2}
\begin{pmatrix} C_V \\ S_V \\ C_N \\ S_N \end{pmatrix} (y) = 
- \varepsilon \kappa \mathbf{M} (y) \int_y^{+ \infty} Q(z) \begin{pmatrix} -c_\gamma^+ S_1 - s_\gamma^- C_1 \\[2pt] -s_\gamma^- S_1 +c_\gamma^+ C_1 \\[2pt] c_\gamma^- S_1 + s_\gamma^+ C_1 \\[2pt] s_\gamma^+ S_1 - c_\gamma^- C_1 \end{pmatrix} (z) \, \text{d}z.
\end{equation}

\subsection{The eigenvalue zero case}\label{S:3.3} 
Assume that $\lambda = 0$ so that $\varepsilon = 0$ and \eqref{Duha2} gives $C_V=S_V=C_N=S_N=0$. Hence, by \eqref{IM1}, $L_-C_2=L_+S_1=L_-S_2=L_+C_1=0$, which leads to $C_2,S_2 \in \ker L_- = \SPAN (Q)$ and $C_1,S_1 \in \ker L_+ = \SPAN (Q')$. This is the first case in Theorem \ref{thmIM}.

\medskip

From now on, we assume $\lambda \neq 0$. Since $(\lambda , C_1 , C_2 , C_N , C_V , S_1 , S_2 , S_N , S_V )$ is a solution of \eqref{IM1} if and only if $(- \lambda , C_1 , C_2 , C_N , C_V , - S_1 , - S_2 , -S_N , -S_V )$ is a solution of \eqref{IM1}, possibly replacing $\lambda$ by $- \lambda$ and $S_*$ by $-S_*$, we also assume without loss of generality that $\lambda > 0$.

\subsection{Additional almost orthogonality relations}\label{S:3.4}

Using the identities \eqref{IMorthos} and \eqref{Duha2}, as well as the system \eqref{IM1}, 
we estimate certain scalar products involving the functions $S_1$, $C_1$, $S_2$, $C_2$
and related to the coercivity properties stated in \eqref{4.spec}.

First, since $L_-C_2 = \lambda S_1$, $L_- S_2 = - \lambda C_1$ and $L_- Q=0$, one has 
readily 
\begin{equation}\label{o1}
\langle S_1 , Q \rangle = \langle C_1 , Q \rangle = 0.
\end{equation}
Second, using \eqref{IM1} and the identity $L_+ ( \Lambda Q ) = -Q$, it follows that
\[
\langle C_2 , \Lambda Q \rangle = -\lambda^{-1} \langle S_N , Q \Lambda Q \rangle 
\]
and so by the Cauchy-Schwarz inequality,
\begin{equation}
\left| \langle C_2 , \Lambda Q \rangle \right| \lesssim \lambda^{-1} \left( \int_{\R} QS_N^2 \right)^\frac12. \label{estim1C2}
\end{equation}
Similarly,
\begin{equation}
\left| \langle S_2 , \Lambda Q \rangle \right| \lesssim \lambda^{-1} \left( \int_{\R} QC_N^2 \right)^\frac12.
\label{estim1S2}
\end{equation}
Using Duhamel's formula \eqref{Duha2}, it is clear that
\begin{equation}\label{estim1DT}
| C_V | + |S_V | + | C_N | + |S_N | \lesssim \varepsilon \int_\R Q \left( |S_1 | + |C_1 | \right)
\lesssim \varepsilon \left( \int_{\R} QS_1^2 +\int_{\R} QC_1^2 \right)^\frac12 . 
\end{equation}
Therefore,
\begin{equation}
\int_{\R} QC_V^2 + \int_{\R} QS_V^2 + \int_{\R} QC_N^2 + \int_{\R} QS_N^2 \lesssim \varepsilon^2 \left(\int_\R QS_1^2 + \int_{\R} QC_1^2 \right).
\label{estim2DT}
\end{equation}
Combining \eqref{estim1C2}, \eqref{estim1S2} and \eqref{estim2DT}, it follows that
\begin{equation}
\left| \langle C_2 , \Lambda Q \rangle \right| + \left| \langle S_2 , \Lambda Q \rangle \right| \lesssim \varepsilon \lambda^{-1} \left( \int_{\R} QS_1^2 + \int_{\R} QC_1^2 \right)^\frac12 . 
\label{estim2SC}
\end{equation}
Then, taking a suitable linear combinaison of the identities \eqref{IMorthos} and using 
the third line of~\eqref{IM1}, we obtain
\begin{equation} \label{r1}
\begin{aligned}
& \int_{\R} \left( (1+\beta ) \sin \left( \frac{\varepsilon y}{1+\beta} \right) - (1-\beta ) \sin \left( \frac{\varepsilon y}{1-\beta} \right) \right) QS_1 \, \text{d}y \\
&= \int_{\R} \left( (1+\beta) \cos \left( \frac{\varepsilon y}{1+\beta} \right) + (1-\beta ) \cos \left( \frac{\varepsilon y}{1-\beta} \right) \right) QC_1 \, \text{d}y \\
&= - \lambda^{-1} \int_{\R} S_2 \Psi
\end{aligned}
\end{equation}
where
\[
\Psi := L_- \left( (1+ \beta) \cos \left( \frac{\varepsilon y}{1+\beta} \right) Q
+ (1-\beta ) \cos \left( \frac{\varepsilon y}{1-\beta} \right) Q \right).
\]
By $L_- Q=0$, we check that
\begin{align*}
\Psi& = 2 \varepsilon Q' \left( \sin \left( \frac{\varepsilon y}{1+\beta} \right) + \sin \left( \frac{\varepsilon y}{1- \beta} \right) \right) \\
&\quad + \varepsilon^2 Q \left( \frac{1}{1+\beta} \cos \left( \frac{\varepsilon y}{1+\beta} \right) + \frac{1}{1-\beta} \cos \left( \frac{\varepsilon y}{1-\beta} \right) \right) .
\end{align*}
On the one hand, using the estimates
\begin{align*}
& \left| \left( \sin \left( \frac{\varepsilon y}{1+\beta} \right) + \sin \left( \frac{\varepsilon y}{1-\beta} \right) \right) - \frac{2 \varepsilon y}{1-\beta^2} \right| \lesssim \varepsilon^3 (1+|y|^3) \\
& \left| \left( \frac{1}{1+\beta} \cos \left( \frac{\varepsilon y}{1+\beta} \right) + \frac{1}{1-\beta} \cos \left( \frac{\varepsilon y}{1-\beta} \right) \right) - \frac{2}{1-\beta^2} \right| \lesssim \varepsilon^2 (1+y^2)
\end{align*}
we obtain
\begin{equation}
\left| \Psi - \frac{2 \varepsilon^2}{1-\beta^2} ( 2yQ'+Q) \right| \lesssim \varepsilon^4 (1+|y|^3) Q(y).
\label{r2}
\end{equation}
On the other hand,
\[
\left| (1+\beta ) \sin \left( \frac{\varepsilon y}{1+\beta} \right) - (1-\beta ) \sin \left( \frac{\varepsilon y}{1-\beta} \right) \right| \lesssim \varepsilon^3 (1+|y|^3),
\]
and thus by \eqref{r1},
\begin{equation}\label{rr1}
\left| \int_{\R} S_2 \Psi \right|
\lesssim \lambda \varepsilon^3 \int_{\R} (1+|y|^3) Q(y) |S_1(y)| \, \text{d}y
\lesssim\lambda \varepsilon^3 \left( \int_{\R} QS_1^2 \right)^\frac12.
\end{equation}
Combining \eqref{r2} and \eqref{rr1}, it follows that
\begin{align}
 \left| \int_{\R} (2yQ'+Q) S_2 \right| 
& \lesssim \varepsilon^{-2} \left| \int_{\R} S_2 \Psi \right| + C \varepsilon^2 \int_{\R} (1+|y|^3) Q(y) |S_2(y)| \, \text{d}y \nonumber\\
&\lesssim \varepsilon \lambda\left( \int_{\R} QS_1^2 \right)^\frac12+ \varepsilon^2 \left( \int_{\R} QS_2^2 \right)^\frac12
\label{estim3S2}
\end{align}
Similarly, using \eqref{IMorthos}, it holds
\begin{equation}
\left| \int_{\R} (2yQ' + Q) C_2 \right| \lesssim \varepsilon \lambda \left( \int_{\R} QC_1^2 \right)^\frac12 + \varepsilon^2 \left( \int_{\R} QC_2^2 \right)^\frac12.
\label{estim3C2}
\end{equation}
Gathering \eqref{estim2SC}, \eqref{estim3S2} and \eqref{estim3C2}, we get
\begin{equation}\label{estim4C2}
\begin{split}
\left| \langle S_2 , Q \rangle \right| +\left| \langle C_2 , Q \rangle \right| &
 \lesssim  \varepsilon ( \lambda + \lambda^{-1} ) \left( \int_{\R} QS_1^2 +\int_{\R} QC_1^2 \right)^\frac12 \\
& \quad + \varepsilon^2 \left( \int_{\R} QS_2^2 + \int_{\R} QC_2^2 \right)^\frac12.
\end{split}
\end{equation}
Different choices of linear combination in \eqref{IMorthos} give other estimates, with similar proofs 
and using the relation $L_-(yQ)=-2Q'$. For example, the identity
\begin{align*}
&\int_{\R} \left( \cos \left( \frac{\varepsilon y}{1+\beta} \right) - \cos \left( \frac{\varepsilon y}{1-\beta} \right) \right) QC_1 \, \text{d}y \\
&\quad = \int_{\R} \left( \sin \left( \frac{\varepsilon y}{1+\beta} \right) + \sin \left( \frac{\varepsilon y}{1-\beta} \right) \right) QS_1 \, \text{d}y
\end{align*}
leads to
\begin{equation}
\left| \langle C_2 , Q' \rangle \right| \lesssim \varepsilon \lambda \left( \int_{\R} QC_1^2 \right)^\frac12 + \varepsilon^2 \left( \int_{\R} QC_2^2 \right)^\frac12,
\label{estim5C2}
\end{equation}
while the identity
\begin{align*}
&\int_{\R} \left( \cos \left( \frac{\varepsilon y}{1+\beta} \right) - \cos \left( \frac{\varepsilon y}{1-\beta} \right) \right) QS_1 \, \text{d}y \\
&\quad = - \int_{\R} \left( \sin \left( \frac{\varepsilon y}{1+\beta} \right) + \sin \left( \frac{\varepsilon y}{1-\beta} \right) \right) QC_1 \, \text{d}y
\end{align*}
yields
\begin{equation}
\left| \langle S_2 , Q' \rangle \right| \lesssim \varepsilon \lambda \left( \int_{\R} QS_1^2 \right)^\frac12 + \varepsilon^2 \left( \int_{\R} QS_2^2 \right)^\frac12.
\label{estim5S2}
\end{equation}

In the next two subsections we show that $\lambda$ and $\lambda^{-1}$ are bounded regardless of~$\omega$, 
starting with an upper bound.

\subsection{Uniform upper bound on the eigenvalue}\label{S:3.up}
In this subsection, using Poho\-zaev-type arguments, we shall prove that $\lambda$ is bounded regardless of $\omega$. Fix a smooth even function $\chi \, : \, \R \to \R$ satisfying $\chi \equiv 1$ on $[0 \, , 1]$, $\chi \equiv 0$ on $[2 \, , + \infty )$ and $\chi ' \leqslant 0$ on $[0 \, , + \infty )$. For $A \gg 1$, introduce
\[ \zeta_A (y) = \exp \left( - \frac{|y|}{A} (1- \chi (y)) \right) \qquad \text{and} \qquad \Phi_A (y) = \int_0^y \zeta_A^2 . \]
Note that $\Phi_A ' = \zeta_A^2$ and that $|\Phi_A|\leqslant |y|$, as $0\leqslant \zeta_A\leqslant 1$ on $\R$.
Moreover, as $A \to + \infty$, $\zeta_A (y) \to 1$ and $\Phi_A (y) \to y$.
 We shall need the simple lemma below.

\begin{lemma} \label{lemA}
Let $\delta >0$. For $A>0$ large enough (depending on $\delta$ and $\lambda$),
for any $w \in H^2 ( \R )$,
\[ \| \zeta_A w' \|^2 \lesssim \delta \lambda^2 \| \zeta_A w \|^2 + \frac{1}{\delta \lambda^2} \| \zeta_A w'' \|^2. \]
\end{lemma}

\begin{proof}
The estimate $| ( \zeta_A^2 )' | \lesssim \frac{1}{A} \zeta_A^2$
is a direct consequence of the definition of $\zeta_A$. Thus, integrating by parts and then using Cauchy-Schwarz and Young inequalities,
\begin{align*}
\int_{\R} \zeta_A^2 (w')^2 &= - \int_{\R} ( \zeta_A^2) ' w w' - \int_{\R} \zeta_A^2 ww'' \\
& \lesssim \frac{1}{A} \int_{\R} \zeta_A^2 |ww'| + \int_{\R} \zeta_A^2 |ww''| \\
& \lesssim \frac{1}{A} \left ( \| \zeta_A w \|^2 + \| \zeta_A w' \|^2 \right ) + \delta \lambda^2 \| \zeta_A w \|^2 + \frac{1}{\delta \lambda^2} \| \zeta_A w'' \|^2 .
\end{align*}
Taking $A>0$ large enough (depending on $\delta$ and $\lambda$), the result follows. 
\end{proof}

We introduce the auxilliary functions
\begin{equation}\label{up0}
\begin{aligned} 
& \widetilde{S}_N = S_N + 2QS_1 , & & \widetilde{S}_V = S_V + 2 \beta QS_1 , \\
& \widetilde{C}_N = C_N + 2QC_1  , & & \widetilde{C}_V = C_V + 2 \beta QC_1.
\end{aligned}
\end{equation}
From \eqref{IM1}-\eqref{IM2} we see that the functions $(S_1 , C_1 , S_2 , C_2 , \widetilde{S}_N , \widetilde{C}_N , \widetilde{S}_V , \widetilde{C}_V )$ satisfy
\begin{equation}
\left \{ \begin{array}{l}
L_- C_2 = \lambda S_1 \\
L_- S_1 = \lambda C_2 + Q \widetilde{S}_N \\
L_- S_2 = - \lambda C_1 \\
L_- C_1 = - \lambda S_2 + Q \widetilde{C}_N
\end{array} \right.
\label{up1}
\end{equation}
and
\begin{equation}
\left \{ \begin{array}{l}
\widetilde{C}_V' - \beta \widetilde{C}_N' = - \varepsilon \widetilde{S}_N \\
\widetilde{S}_V' - \beta \widetilde{S}_N' = \varepsilon \widetilde{C}_N \\
\widetilde{C}_N' - \beta \widetilde{C}_V' = - \varepsilon \widetilde{S}_V + 2 \nu^2 (QC_1)' \\
\widetilde{S}_N' - \beta \widetilde{S}_V' = \varepsilon \widetilde{C}_V + 2 \nu^2 (QS_1)'
\end{array} \right.
\label{up2}
\end{equation}
where $\nu=\sqrt{1-\beta^2}$.

From \eqref{up1}, we have (note the convenient cancellation $L_-Q=0$ which avoids the presence of
the functions $\widetilde S_N$ and $\widetilde C_N$ without derivative on the right-hand side)
\begin{align}
& L_-^2S_1 - \lambda^2 S_1 = -2Q' \widetilde{S}_N' - Q \widetilde{S}_N'' \label{up3} \\
\text{and} \quad & L_-^2 C_1 - \lambda^2 C_1 = -2Q' \widetilde{C}_N' - Q \widetilde{C}_N'' . \label{up4}
\end{align}
We compute
\[ L_-^2 = \partial_y^4 + \mathbf{P}_2 \partial_y^2 + \mathbf{P}_1 \partial_y + \mathbf{P}_0 \]
where $\mathbf{P}_2 = -2(1-Q^2)$, $\mathbf{P}_1 = 4QQ'$ and $\mathbf{P}_0 = (1-Q^2)^2+(Q^2)''$. 
Note that
\begin{equation} 
| \mathbf{P}_0 | + | \mathbf{P}_2 | \lesssim 1 , \quad | \mathbf{P}_0^{(j)} | + | \mathbf{P}_2^{(j)} | \lesssim Q \quad \text{and} \quad | \mathbf{P}_1^{(k)} | \lesssim Q 
\label{upP}
\end{equation}
for any $j \geqslant 1$ and $k \geqslant 0$. Now we multiply \eqref{up3} by $\Phi_A S_1'$ and integrate. Integrating by parts, we see that
\begin{align*}
& \int_{\R} \Phi_A S_1' S_1'''' = \frac{3}{2} \int_{\R} \zeta_A^2 (S_1'')^2 - \frac{1}{2} \int_{\R} ( \zeta_A^2 )'' (S_1')^2 , \\
& \int_{\R} \Phi_A S_1' \mathbf{P}_2 S_1 '' = - \frac{1}{2} \int_{\R} ( \Phi_A \mathbf{P}_2 ' + \zeta_A^2 \mathbf{P}_2 ) (S_1')^2 \\
\text{and} \quad & \int_{\R} \Phi_A S_1' \mathbf{P}_0 S_1 = - \frac{1}{2}  \int_{\R} ( \Phi_A \mathbf{P}_0' + \zeta_A^2 \mathbf{P}_0 ) S_1^2 .
\end{align*}
Thus the left-hand side of \eqref{up3} gives
\[ \int_{\R} \Phi_A S_1' (L_-^2S_1-\lambda^2S_1) = \frac{3}{2} \int_{\R} \zeta_A^2 (S_1'')^2 + \int_{\R} \mathbf{R}_{A,1} (S_1')^2 + \int_{\R} \mathbf{R}_{A,0} S_1^2 + \frac{\lambda^2}{2} \int_{\R} \zeta_A^2 S_1^2 \]
where $\mathbf{R}_{A,1} = \Phi_A \left ( \mathbf{P}_1 - \frac{1}{2} \mathbf{P}_2' \right ) - \frac{1}{2} \zeta_A^2 \mathbf{P}_2 - \frac{1}{2} ( \zeta_A^2 )''$ and $\mathbf{R}_{A,0} = - \frac{1}{2} ( \Phi_A \mathbf{P}_0' + \zeta_A^2 \mathbf{P}_0)$. Therefore, from \eqref{up3} and then by integration by parts, we get
\begin{align*}
& \frac{3}{2} \int_{\R} \zeta_A^2 (S_1'')^2 + \int_{\R} \mathbf{R}_{A,1} (S_1')^2 + \int_{\R} \mathbf{R}_{A,0} S_1^2 + \frac{\lambda^2}{2} \int_{\R} \zeta_A^2 S_1^2 \\
& \quad  = \int_{\R} \Phi_A S_1' (-2Q' \widetilde{S}_N' - Q \widetilde{S}_N'') \\
& \quad = \int_{\R} (\zeta_A^2 Q - \Phi_A Q') S_1' \widetilde{S}_N' + \int_{\R} \Phi_A Q S_1'' \widetilde{S}_N'.
\end{align*}
From \eqref{up4}, we get similarly
\begin{align*}
& \frac{3}{2} \int_{\R} \zeta_A^2 (C_1'')^2 + \int_{\R} \mathbf{R}_{A,1} (C_1')^2 + \int_{\R} \mathbf{R}_{A,0} C_1^2 + \frac{\lambda^2}{2} \int_{\R} \zeta_A^2 C_1^2 \\
& \quad = \int_{\R} (\zeta_A^2 Q - \Phi_A Q') C_1' \widetilde{C}_N' + \int_{\R} \Phi_A Q C_1'' \widetilde{C}_N' .
\end{align*}
Summing these two identities, we obtain
\begin{equation}
\begin{split}
& \frac{3}{2} \left ( \| \zeta_A S_1'' \|^2 + \| \zeta_A C_1'' \|^2 \right )
+ \frac{\lambda^2}{2} \left ( \| \zeta_A S_1 \|^2 + \| \zeta_A C_1 \|^2 \right )
\\
& \quad+ \int_{\R} \mathbf{R}_{A,1} ((S_1')^2+(C_1')^2) + \int_{\R} \mathbf{R}_{A,0} (S_1^2 + C_1^2)  \\
& \quad = \int_{\R} ( \zeta_A^2 Q - \Phi_A Q') (S_1' \widetilde{S}_N '+C_1' \widetilde{C}_N ') + \int_{\R} \Phi_A Q (S_1'' \widetilde{S}_N ' + C_1'' \widetilde{C}_N ') .
\end{split}
\label{up5}
\end{equation}

Now, we use \eqref{up2} to obtain the following system for $(\widetilde S_N,\widetilde C_N)$
\begin{align}
& \nu^2 ( \widetilde{S}_N '' - 2(QS_1)'') = - \varepsilon^2 \widetilde{S}_N + 2 \beta \varepsilon \widetilde{C}_N' \label{up6} \\
\text{and} \quad & \nu^2 ( \widetilde{C}_N '' -2 (QC_1)'' ) = - \varepsilon^2 \widetilde{C}_N - 2 \beta \varepsilon \widetilde{S}_N ' . \label{up7}
\end{align}
We multiply \eqref{up6} by $\Phi_A \widetilde{S}_N'$ and integrate. By integration by parts, it follows that
\[
- \frac{\nu^2}{2} \int_{\R} \zeta_A^2 ( \widetilde{S}_N')^2 - 2 \nu^2 \int_{\R} \Phi_A (QS_1)'' \widetilde{S}_N ' \  = \frac{\varepsilon^2}{2} \int_{\R} \zeta_A^2 \widetilde{S}_N^2 + 2 \beta \varepsilon \int_{\R} \Phi_A \widetilde{S}_N ' \widetilde{C}_N'.
\]
Similarly, using \eqref{up7},
\[
- \frac{\nu^2}{2} \int_{\R} \zeta_A^2 ( \widetilde{C}_N')^2 - 2 \nu^2 \int_{\R} \Phi_A (QC_1)'' \widetilde{C}_N '   = \frac{\varepsilon^2}{2} \int_{\R} \zeta_A^2 \widetilde{C}_N^2 - 2 \beta \varepsilon \int_{\R} \Phi_A \widetilde{S}_N ' \widetilde{C}_N' .
\]
Summing the two identities above, we obtain (note a convenient cancellation)
\begin{align}
& \frac{\nu^2}{2} \left ( \| \zeta_A \widetilde{S}_N' \|^2 + \| \zeta_A \widetilde{C}_N ' \|^2 \right ) + \frac{\varepsilon^2}{2} \left ( \| \zeta_A \widetilde{S}_N \|^2 + \| \zeta_A \widetilde{C}_N  \|^2 \right ) \nonumber \\
& \quad = -2 \nu^2 \int_{\R} \Phi_A ((QS_1)'' \widetilde{S}_N' + (QC_1)'' \widetilde{C}_N ') \nonumber \\
& \quad = -2 \nu^2 \int_{\R} \Phi_A Q ( S_1'' \widetilde{S}_N' + C_1'' \widetilde{C}_N') -4 \nu^2 \int_{\R} \Phi_A Q' ( S_1' \widetilde{S}_N' + C_1' \widetilde{C}_N' ) \nonumber \\
& \qquad  -2 \nu^2 \int_{\R} \Phi_A Q'' (S_1 \widetilde{S}_N' + C_1 \widetilde{C}_N' ) . \label{up8}
\end{align}
We now combine \eqref{up5} and \eqref{up8} in order to make the term
$\int_{\R} \Phi_A Q ( S_1'' \widetilde{S}_N' + C_1'' \widetilde{C}_N')$ disappear. 
Explicitly, \eqref{up5}$+ \frac{1}{2 \nu^2} \times$\eqref{up8} gives
\begin{equation}
\begin{split}
& \frac{3}{2} \left ( \| \zeta_A S_1'' \|^2 + \| \zeta_A C_1'' \|^2 \right )  + \frac{\lambda^2}{2} \left ( \| \zeta_A S_1 \|^2 + \| \zeta_A C_1 \|^2 \right ) \\
& \quad + \frac{1}{4} \left ( \| \zeta_A \widetilde{S}_N' \|^2 + \| \zeta_A \widetilde{C}_N ' \|^2 \right ) + \frac{\varepsilon^2}{4 \nu^2} \left ( \| \zeta_A \widetilde{S}_N \|^2 + \| \zeta_A \widetilde{C}_N  \|^2 \right ) \\
& \quad = 
- \int_{\R} \mathbf{R}_{A,1} ((S_1')^2+(C_1')^2) - \int_{\R} \mathbf{R}_{A,0} (S_1^2 + C_1^2)\\
&\qquad +\int_{\R} ( \zeta_A^2 Q - 3 \Phi_A Q') ( S_1' \widetilde{S}_N' + C_1' \widetilde{C}_N' ) - \int_{\R} \Phi_A Q'' ( S_1 \widetilde{S}_N' + C_1 \widetilde{C}_N' ) .
\end{split}
\label{up9}
\end{equation}

Now we control the right-hand side of \eqref{up9}. First, using \eqref{upP}, we see that $| \mathbf{R}_{A,1} | \lesssim |y|Q + \zeta_A^2 \lesssim Q^{1/2} + \zeta_A^2 \lesssim \zeta_A^2$. Thus,
\begin{align}
& \left | \int_{\R} \mathbf{R}_{A,1} ((S_1')^2+(C_1')^2) \right | \nonumber \\
& \quad \lesssim \| \zeta_A S_1' \|^2 + \| \zeta_A C_1' \|^2 \nonumber \\
& \quad \lesssim \delta \lambda^2 \left ( \| \zeta_A S_1 \|^2 + \| \zeta_A C_1 \|^2 \right ) + \frac{1}{\delta \lambda^2} \left ( \| \zeta_A S_1'' \|^2 + \| \zeta_A C_1 '' \|^2 \right ) \label{up10}
\end{align}
using Lemma \ref{lemA}, where $\delta >0$ is a small parameter which we shall fix later.

Then, using \eqref{upP} again, we see that $| \mathbf{R}_{A,0} | \lesssim |y|Q + \zeta_A^2 \lesssim \zeta_A^2$, and so
\begin{equation} 
\left | \int_{\R} \mathbf{R}_{A,0} ( S_1^2 + C_1^2 ) \right | \lesssim \| \zeta_A S_1 \|^2 + \| \zeta_A C_1 \|^2 . 
\label{up11}
\end{equation}
Next, using Young's inequality and Lemma \ref{lemA},
\begin{align}
& \left | \int_{\R} ( \zeta_A^2 Q - 3 \Phi_A Q') ( S_1 ' \widetilde{S}_N ' + C_1 ' \widetilde{C}_N ' ) \right | \nonumber \\
& \quad \lesssim \int_{\R} \zeta_A^2 \left ( |S_1' \widetilde{S}_N ' | + | C_1 ' \widetilde{C}_N ' | \right ) \nonumber \\
& \quad \lesssim \| \zeta_A S_1 ' \| \, \| \zeta_A \widetilde{S}_N ' \| + \| \zeta_A C_1 ' \| \, \| \zeta_A \widetilde{C}_N ' \| \nonumber \\
& \quad \lesssim \sqrt{\lambda} \left ( \| \zeta_A S_1' \|^2 + \| \zeta_A C_1' \|^2 \right ) + \frac{1}{\sqrt{\lambda}} \left ( \| \zeta_A \widetilde{S}_N ' \|^2 + \| \zeta_A \widetilde{C}_N' \|^2 \right ) \nonumber \\
& \quad \lesssim \sqrt{\lambda} \left ( \lambda \| \zeta_A S_1 \|^2 + \frac{1}{\lambda} \| \zeta_A S_1 '' \|^2 + \lambda \| \zeta_A C_1 \|^2 + \frac{1}{\lambda} \| \zeta_A C_1 '' \|^2 \right ) \nonumber \\
& \qquad  + \frac{1}{\sqrt{\lambda}} \left ( \| \zeta_A \widetilde{S}_N ' \|^2 + \| \zeta_A \widetilde{C}_N \|^2 \right ) ,\nonumber
\end{align}
which we rewrite as
\begin{align}
& \left | \int_{\R} ( \zeta_A^2 Q - 3 \Phi_A Q') ( S_1 ' \widetilde{S}_N ' + C_1 ' \widetilde{C}_N ' ) \right | \nonumber \\
& \quad \lesssim \lambda\sqrt{\lambda} \left ( \| \zeta_A S_1 \|^2 + \| \zeta_A C_1 \|^2 \right ) + \frac{1}{\sqrt{\lambda}} \left ( \| \zeta_A S_1 '' \|^2 + \| \zeta_A C_1 '' \|^2 \right ) \label{up12}\\
& \qquad  + \frac{1}{\sqrt{\lambda}} \left ( \| \zeta_A \widetilde{S}_N ' \|^2 + \| \zeta_A \widetilde{C}_N ' \|^2 \right ) .  \nonumber
\end{align}
Finally, again by Young's inequality,
\begin{align}
& \left | \int_{\R} \Phi_A Q'' (S_1 \widetilde{S}_N' + C_1 \widetilde{C}_N') \right | \nonumber \\
& \quad \lesssim \int_{\R} \zeta_A^2 \left ( |S_1 \widetilde{S}_N' | + |C_1 \widetilde{C}_N' | \right ) \nonumber \\
& \quad \lesssim \| \zeta_A S_1 \| \, \| \zeta_A \widetilde{S}_N ' \| + \| \zeta_A C_1 \| \, \| \zeta_A \widetilde{C}_N ' \| \nonumber \\
& \quad \lesssim \lambda \left ( \| \zeta_A S_1 \|^2 + \| \zeta_A C_1 \|^2 \right ) + \frac{1}{\lambda} \left ( \| \zeta_A \widetilde{S}_N ' \|^2 + \| \zeta_A \widetilde{C}_N ' \|^2 \right ) . \label{up13}
\end{align}

Injecting \eqref{up10}, \eqref{up11}, \eqref{up12} and \eqref{up13} in \eqref{up9}, it follows that
for a constant $C>0$,
\begin{align*}
 & \left ( \frac{3}{2} - \frac{C}{\delta \lambda^2} - \frac{C}{\sqrt{\lambda}} \right ) \left ( \| \zeta_A S_1'' \|^2 + \| \zeta_A C_1'' \|^2 \right ) \\
&  \quad + \lambda^2 \left ( \frac{1}{2} - \frac{C}{\lambda^2} - C \delta - \frac{C}{\sqrt{\lambda}} - \frac{C}{\lambda} \right ) \left ( \| \zeta_A S_1 \|^2 + \| \zeta_A C_1 \|^2 \right ) \\
&  \quad + \left ( \frac{1}{4} - \frac{C}{\sqrt{\lambda}}- \frac{C}{\lambda} \right ) \left ( \| \zeta_A \widetilde{S}_N' \|^2 + \| \zeta_A \widetilde{C}_N ' \|^2 \right ) + \frac{\varepsilon^2}{4 \nu^2} \left ( \| \zeta_A \widetilde{S}_N \|^2 + \| \zeta_A \widetilde{C}_N \|^2 \right ) \leq 0 .
\end{align*}
Fix $\delta >0$ small enough (but independent of $\lambda$ and $\omega$) such that $\frac{1}{2} - C  \delta \geqslant \frac{1}{4}$. Now assume that $\lambda$ is larger than a certain constant (which depends on the constant  $C$  and  on $\delta$, but does not depend on $\omega$), such that
\[ \frac{3}{2} - \frac{C}{\delta \lambda^2} - \frac{C}{\sqrt{\lambda}} \geqslant 1 , \quad \frac{1}{4} - \frac{C}{\lambda^2} - \frac{C}{\sqrt{\lambda}} - \frac{C}{\lambda} \geqslant \frac{1}{8} \quad \text{and} \quad \frac{1}{4} - \frac{C}{\sqrt{\lambda}} -\frac{C}\lambda\geqslant \frac{1}{8} . \]
Therefore, taking $A>0$ large enough (depending on $\delta$ and $\lambda$), it follows that
\begin{align*} 
& \left ( \| \zeta_A S_1'' \|^2 + \| \zeta_A C_1'' \|^2 \right ) + \frac{\lambda^2}{8} \left ( \| \zeta_A S_1 \|^2 + \| \zeta_A C_1 \|^2 \right ) \\
&  \quad + \frac{1}{8} \left ( \| \zeta_A \widetilde{S}_N' \|^2 + \| \zeta_A \widetilde{C}_N ' \|^2 \right ) + \frac{\varepsilon^2}{4 \nu^2} \left ( \| \zeta_A \widetilde{S}_N \|^2 + \| \zeta_A \widetilde{C}_N \|^2 \right ) 
\leq 0.
\end{align*}
Hence $S_1=C_1=\widetilde{S}_N=\widetilde{C}_N=0$. From the second and fourth lines of \eqref{up1},
it follows that $ S_2= C_2=0$. 
From \eqref{up2}  it follows that $\widetilde S_V=\widetilde C_V=0$.
Lastly, from \eqref{up0}, $S_N=C_N=S_V=C_V=0$.

\medskip

In the remainder of the proof, we shall assume that $\lambda$ is uniformly bounded regardless of $\omega$, which means $\lambda \lesssim 1$.

\subsection{Uniform lower bound on the eigenvalue}\label{S:3.5}
Set
\[
\mathbf{H} := \|S_1\|_{H^1}^2 + \|C_1\|_{H^1}^2 + \|S_2\|_{H^1}^2 + \|C_2\|_{H^1}^2.
\]
By the first two lines of \eqref{IM1}, we see that
\begin{align*}
& \lambda \int_{\R} S_1 C_2 = \int_{\R} C_2 L_- C_2 ,\\
&\lambda \int_{\R} S_1 C_2 = \int_{\R} S_1 ( L_+ S_1 - Q S_N ) 
\end{align*}
and so, adding up these identities,
\begin{equation}
\langle L_+ S_1 , S_1 \rangle + \langle L_- C_2 , C_2 \rangle 
- \int_{\R} QS_1 S_N 
= 2 \lambda \int_{\R} S_1 C_2 . \label{4.1}
\end{equation}
In order to use \eqref{4.spec}, we control three scalar products
\begin{itemize}
\item First, from \eqref{o1}, $\langle S_1 , Q \rangle = 0$. 
\item Second, since $L_- (yQ)=-2Q'$ and $L_-C_2 = \lambda S_1$,
\[
\left| \langle S_1 , yQ \rangle \right| = \left| \lambda^{-1} \langle L_- C_2 , yQ \rangle \right| 
= \lambda^{-1} \left| \langle C_2 , -2Q' \rangle \right| 
\]
and so by \eqref{estim5C2}
\[
\left| \langle S_1 , yQ \rangle \right|
\lesssim \varepsilon \left( \int_{\R} QC_1^2 \right)^\frac12 + \lambda^{-1} \varepsilon^2 \left( \int_{\R} QC_2^2 \right)^\frac12.
\]
Recalling that $\varepsilon = \lambda \sqrt{\omega}$, we get the rough estimate
\[
\left| \langle S_1 , yQ \rangle \right| 
\lesssim \lambda \sqrt{\omega} \mathbf{H} ^\frac12.
\]
\item Third, \eqref{estim2SC} gives
\[
\left| \langle C_2 , \Lambda Q \rangle \right| \lesssim \varepsilon \lambda^{-1} \left( \int_{\R} QS_1^2+\int_{\R} QC_1^2 \right)^\frac12 \lesssim \sqrt{\omega} \mathbf{H}^\frac12.
\]
\end{itemize}
Gathering the three estimates above, we have proved that
\begin{equation}
\langle S_1 , Q \rangle^2 + \langle S_1 , yQ \rangle^2 + \langle C_2 ,\Lambda Q \rangle^2 \lesssim \omega \mathbf{H} .
\label{4.proj}
\end{equation}
Now, we control a scalar product in \eqref{4.1} via the Cauchy-Schwarz inequality and then \eqref{estim2DT}
\begin{align}
\left| \int_{\R} QS_1 S_N \right| & \lesssim \left( \int_{\R} QS_1^2 \right)^\frac12 \left( \int_{\R} QS_N^2 \right)^\frac12 \nonumber\\
&\lesssim\varepsilon \left( \int_{\R} QS_1^2 \right)^\frac12 \left( \int_{\R} QS_1^2 + \int_{\R} QC_1^2 \right)^\frac12 \lesssim \sqrt{\omega} \mathbf{H}. \label{4.2}
\end{align}
Combining \eqref{4.spec}, \eqref{4.1}, \eqref{4.proj} and \eqref{4.2}, we find that
\begin{align}
& C \left( \|S_1\|_{H^1}^2 + \|C_2\|_{H^1}^2 \right) - C' \sqrt{\omega} \, \mathbf{H} \nonumber \\
&\quad \leqslant \langle L_+ S_1 , S_1 \rangle + \langle L_- C_2 , C_2 \rangle - \int_{\R} QS_1S_N \nonumber\\
&\quad\leqslant 2 \lambda \int_{\R} S_1C_2
\leqslant\lambda \left( \|S_1\|_{H^1}^2 + \|C_2\|_{H^1}^2 \right) \label{4.est1}
\end{align}
By the same argument starting with the last two lines of \eqref{IM1}, we obtain similarly
\begin{equation}
C \left( \|S_2\|_{H^1}^2 + \|C_1\|_{H^1}^2 \right) - C' \sqrt{\omega} \, \mathbf{H}
\leqslant
\lambda \left( \|S_2\|_{H^1}^2 + \|C_1\|_{H^1}^2 \right). 
\label{4.est2}
\end{equation}
Summing \eqref{4.est1} and \eqref{4.est2}, we get $ C \mathbf{H} - C' \sqrt{\omega} \mathbf{H}\leqslant \lambda \mathbf{H}$.
Taking $\omega > 0$ small enough, we have $C\mathbf{H}\leqslant \lambda \mathbf{H} $.
If $\mathbf{H}=0$, then $C_1=S_1=C_2=S_2=0$ and it follows that also $C_V=S_V=C_N=S_N=0$ (see \eqref{Duha2} for example). 
In the remainder of the proof, we assume that $\mathbf{H} > 0$, which yields $\lambda \geqslant C$, where the constant $C>0$ is independent of
$\omega$.

\subsection{The transformed problem}\label{S:3.6}
As in some previous works (\cite{Ko1,Ko3,Ma1,GR1}),
we shall use a transformed problem based on the factorisation property \eqref{facto}.
We introduce 
\[
W_2 = S^2C_2\quad \mbox{and}\quad Z_2 = S^2S_2.
\] 
Since $C_2,S_2 \in H^s ( \R )$ for all $s \geqslant 0$, we also have $W_2,Z_2 \in H^s ( \R )$ for all $s \geqslant 0$. Using identity \eqref{facto} and then system \eqref{IM1}, it follows that
\begin{align*}
M^2 W_2 = M^2 S^2 C_2 
&= S^2L_+L_- C_2 \\
&= \lambda S^2 \left( \lambda C_2 + QS_N \right)
=\lambda^2 W_2 + \lambda S^2QS_N
\end{align*}
Note that by the definition of $S$, $S^2 (QS_N) = QS_N''$,
so that 
\begin{equation}
M^2 W_2 = \lambda^2 W_2 + F_W \quad \mbox{where} \quad
F_W := \lambda QS_N''.
\label{MW}
\end{equation}
Similarly, 
\begin{equation}
M^2 Z_2 = \lambda^2 Z_2 + F_Z
\quad \mbox{where}\quad F_Z := -\lambda QC_N''.
\label{MZ}
\end{equation}
Let us estimate $W_2 = S^2 C_2$ in terms of $C_2$. See \cite{Ma1} for similar estimates that we adapt here.
To begin with, using $W_2=S^2C_2$ and integrating, one obtains
\begin{equation}
C_2 = ayQ + bQ + Q \int_0^y \int_0^z \frac{W_2}{Q} 
\label{C2W2}
\end{equation}
for some integration constants $a,b \in \R$. 
First, we estimate $a$. Taking the scalar product of
equation \eqref{C2W2} by $Q'$, we have
\[ -a \underbrace{\langle yQ , Q' \rangle }_{= \, -2} = - \langle C_2 , Q' \rangle + b \underbrace{\langle Q , Q' \rangle }_{= \, 0} + \left\langle Q' , Q \int_0^y \int_0^z \frac{W_2}{Q} \right\rangle \]
and we estimate the last term on the right-hand side as follows
\begin{align*}
\left| \left\langle Q' , Q \int_0^y \int_0^z \frac{W_2}{Q} \right\rangle \right| \lesssim & \int_{\R} Q^2 \int_0^y \int_0^z \frac{|W_2|}{Q} \\
\lesssim & \int_{\R} Q^2 \left( \int_{\R} Q^{\frac{1}{2}} W_2^2 \right)^\frac12 \left( \int_0^y Q^{- \frac{5}{2}} \right)^\frac12 \\
\lesssim & \left( \int_{\R} Q^{\frac{1}{2}} W_2^2 \right)^\frac12 \int_{\R} Q^{\frac{3}{4}} \\
\lesssim & \left( \int_{\R} Q^{\frac{1}{2}} W_2^2 \right)^\frac12.
\end{align*}
Using also the estimate \eqref{estim5C2}, we obtain
\begin{equation}
|a| \lesssim \left( \int_{\R} Q^{\frac{1}{2}} W_2^2 \right)^\frac12 + \varepsilon \left( \int_{\R} QC_1^2 \right)^\frac12 + \varepsilon^2 \left( \int_{\R} QC_2^2 \right)^\frac12.
\label{estima}
\end{equation}
Now, we estimate $b$. Taking the scalar product of
equation \eqref{C2W2} by $Q$, we have
\[ b \underbrace{\langle Q , Q \rangle }_{= \, 4} = \langle C_2 , Q \rangle - a \underbrace{\langle yQ , Q \rangle }_{= \, 0} - \left\langle Q , Q \int_0^y \int_0^z \frac{W_2}{Q} \right\rangle . \]
Using \eqref{estim4C2}, we obtain
(recall that $\lambda + \lambda^{-1} \lesssim 1$)
\begin{equation}
\begin{split}
|b| & \lesssim \left( \int_{\R} Q^{\frac{1}{2}} W_2^2 \right)^\frac12 + \varepsilon \left( \int_{\R} QC_1^2 +\int_{\R} QS_1^2 \right)^\frac12 \\
& \quad + \varepsilon^2 \left( \int_{\R} QC_2^2 + \int_{\R} QS_2^2 \right)^\frac12.
\end{split}
\label{estimb}
\end{equation}
Therefore, using \eqref{C2W2} again, we find
\begin{align*}
\int_{\R} QC_2^2 \lesssim & \int_{\R} \left( a^2 y^2 Q^2 + b^2 Q^2 + Q^2 \left( \int_0^y \int_0^z \frac{W_2}{Q} \right)^2 \right) \\
\lesssim & \, \, a^2 + b^2 + \int_{\R} Q^{\frac{1}{2}} W_2^2 \\
\lesssim & \int_{\R} Q^{\frac{1}{2}} W_2^2 + \varepsilon^2 \int_{\R} QC_1^2 + \varepsilon^2 \int_{\R} QS_1^2 + \varepsilon^4 \int_{\R} QC_2^2 + \varepsilon^4 \int_{\R} Q S_2^2.
\end{align*}
Similarly, we have
\[ \int_{\R} QS_2^2 \lesssim \int_{\R} Q^{\frac{1}{2}} Z_2^2 + \varepsilon^2 \int_{\R} QC_1^2 + \varepsilon^2 \int_{\R} QS_1^2 + \varepsilon^4 \int_{\R} QC_2^2 + \varepsilon^4 \int_{\R} Q S_2^2. \]
Summing both estimates above and taking $\omega >0$ small enough, it follows that
\begin{equation}
\int_{\R} QC_2^2 + \int_{\R} QS_2^2 \lesssim \int_{\R} Q^{\frac{1}{2}} W_2^2 + \int_{\R} Q^{\frac{1}{2}} Z_2^2 + \varepsilon^2 \int_{\R} QC_1^2 + \varepsilon^2 \int_{\R} QS_1^2 .
\label{estim6C2}
\end{equation}
We also estimate the weighted norms of $C_2'$ and $S_2'$. Indeed, differentiating \eqref{C2W2},
\[ C_2 ' = a (yQ)' + bQ' + Q \int_0^y \frac{W_2}{Q} + Q' \int_0^y \int_0^z \frac{W_2}{Q}. \]
Using the estimates \eqref{estima} and \eqref{estimb}, and proceeding as above, we find that
\begin{align}
\int_{\R} Q((C_2')^2+(S_2')^2) \lesssim & \int_{\R} Q^{\frac{1}{2}} (W_2^2+Z_2^2) + \varepsilon^2 \int_{\R} Q(C_1^2+S_1^2) + \varepsilon^4 \int_{\R} Q(C_2^2+S_2^2) \nonumber \\
\lesssim & \int_{\R} Q^{\frac{1}{2}} (W_2^2+Z_2^2) + \varepsilon^2 \int_{\R} QC_1^2 + \varepsilon^2 \int_{\R} QS_1^2 \label{estimC2'}
\end{align}
using \eqref{estim6C2} for the last estimate. Similarly,
differentiating \eqref{C2W2} twice
\begin{equation}
\int_{\R} Q((C_2'')^2+(S_2'')^2) \lesssim \int_{\R} Q^{\frac{1}{2}} (W_2^2+Z_2^2) + \varepsilon^2 \int_{\R} QC_1^2 + \varepsilon^2 \int_{\R} QS_1^2 ,
\label{estimC2''}
\end{equation}
and differentiating thrice
\begin{equation}
\begin{split}
\int_{\R} Q((C_2''')^2+(S_2''')^2) & \lesssim 
\int_{\R} Q^{\frac{1}{2}} ((W_2')^2+(Z_2')^2) +
\int_{\R} Q^{\frac{1}{2}} (W_2^2+Z_2^2) \\
& \quad + \varepsilon^2 \int_{\R} QC_1^2 + \varepsilon^2 \int_{\R} QS_1^2 . 
\end{split}
\label{estimC2'''}
\end{equation}

These estimates enable us to control also weighted norms for $S_1, S_1', C_1,C_1'$.
Indeed, from \eqref{IM1} we have
\[ 
S_1^2 = \lambda^{-2} (L_-C_2)^2 = \lambda^{-2} ((1-Q^2) C_2 - C_2'')^2
\]
whence, using \eqref{estim6C2} and \eqref{estimC2''},
\begin{align*}
\int_{\R} QS_1^2 \lesssim & \, \, \lambda^{-2} \int_{\R} QC_2^2 + \lambda^{-2} \int_{\R} Q (C_2'')^2 \\
\lesssim & \, \, \int_{\R} Q^{\frac{1}{2}} W_2^2 + \varepsilon^2 \left( \int_{\R} QC_1^2 + \int_{\R} QS_1^2 \right),
\end{align*}
Similarly,
\[ \int_{\R} QC_1^2 \lesssim \int_{\R} Q^{\frac{1}{2}} Z_2^2 + \varepsilon^2 \left( \int_{\R} QC_1^2 + \int_{\R} QS_1^2 \right) \]
whence
\[ \int_{\R} QC_1^2 + \int_{\R} QS_1^2 \lesssim \int_{\R} Q^{\frac{1}{2}} W_2^2 + \int_{\R} Q^{\frac{1}{2}} Z_2^2 + \varepsilon^2 \left( \int_{\R} QC_1^2 + \int_{\R} QS_1^2 \right). \]
Taking $\omega >0$ small enough, it follows that
\begin{equation}
\int_{\R} QC_1^2 + \int_{\R} QS_1^2 \lesssim 
\int_{\R} Q^{\frac{1}{2}} (W_2^2 + Z_2^2) .
\label{estim7SC}
\end{equation}
Gathering \eqref{estim6C2} and \eqref{estim7SC}, it follows that
\begin{equation}
\int_{\R} Q \left( C_2^2 + (C_2')^2 + (C_2'')^2 + S_2^2 + (S_2')^2 + (S_2'')^2 \right) \lesssim \int_{\R} Q^{\frac{1}{2}} (W_2^2 + Z_2^2 ) .
\label{estim8SC}
\end{equation}
Using also \eqref{estimC2'''} and
following the same steps as for the proof of \eqref{estim7SC}, we get
\begin{equation}
\int_{\R} Q\left((C_1')^2+(S_1')^2 \right)
\lesssim \int_{\R} Q^{\frac{1}{2}}
\left((W_2')^2 + (Z_2')^2+ W_2^2 +Z_2^2 \right).
\label{estim7SC'}
\end{equation}

\subsection{Virial arguments on the transformed problem} \label{S:3.7}

The argument is adapted from the proof of the non-existence of NLS internal modes in \cite{Ma1,GR1}. 
However, we point out a major difference here. In the transformed
problem~\eqref{MW}, the operator $M^2$ is quite simple but it does not
have a potential. In \cite{Ma1,GR1}, the transformed problem
has a non-trivial repulsive potential, which 
happens to be crucial to prove the non-existence of internal mode.
Here, to compensate the lack of repulsive potential in the transformed operator,
we will use the additional almost orthogonality relations
\eqref{estim3S2} and \eqref{estim3C2}, obtained from 
the subsystem \eqref{IM3}
along with the coercivity inequality stated in Lemma \ref{lemh}.

We use localised virial arguments. We refer to \S\ref{S:3.up} for the definition of the functions $\Phi_A$ and $\zeta_A$. Multiply \eqref{MW} by $2 \Phi_A W_2' + \Phi_A ' W_2$ and integrate on $\R$.
Recalling that $M^2 = ( \partial_y^2 -1)^2$, we get
\begin{multline*}
\int_{\R} (W_2'''' -2 W_2'' + W_2)(2 \Phi_A W_2'+ \Phi_A ' W_2)\\
= \lambda^2 \int_{\R} W_2 (2 \Phi_A W_2'+ \Phi_A ' W_2) + \int_{\R} F_W (2 \Phi_A W_2'+ \Phi_A ' W_2).
\end{multline*}
Integrating by parts, we obtain the identities
\begin{align*}
\int_{\R} W_2''''(2 \Phi_A W_2'+ \Phi_A ' W_2) &= 4 \int_{\R} \zeta_A^2 (W_2'')^2 - 3 \int_{\R} (\zeta_A^2) '' (W_2')^2 + \frac{1}{2} \int_{\R} ( \zeta_A^2 )'''' W_2^2,\\
\int_{\R} W_2''(2 \Phi_A W_2'+ \Phi_A ' W_2) &= -2 \int_{\R} \zeta_A^2 (W_2')^2 + \frac{1}{2} \int_{\R} (\zeta_A^2)'' W_2^2,\\
\int_{\R} W_2(2 \Phi_A W_2'+ \Phi_A ' W_2)&=0.
\end{align*}
Hence,
\begin{multline}
4 \int_{\R} \zeta_A^2 (W_2'')^2 + 4 \int_{\R} \zeta_A^2 (W_2')^2 - 3 \int_{\R} ( \zeta_A^2 )'' (W_2')^2 \\
+ \int_{\R} \left( \frac{1}{2} ( \zeta_A^2 ) '''' - ( \zeta_A^2 )'' \right) W_2^2 = \int_{\R} F_W (2 \Phi_A W_2' + \Phi_A' W_2). 
\label{virielA1}
\end{multline}
We know that $W_2' , W_2'' \in L^2 ( \R )$ and $\zeta_A^2 (y) \to 1$ as $A \to + \infty$. Moreover, $\left| ( \zeta_A^2 )'' \right| \lesssim \frac{1}{A}$ on $\R$. Hence, by the dominated convergence Theorem,
\begin{align*}
& \int_{\R} \zeta_A^2 (W_2'')^2 \, \underset{A \to + \infty}{\longrightarrow} \, \int_{\R} (W_2'')^2 , \\
& \int_{\R} \zeta_A^2 (W_2')^2 \, \underset{A \to + \infty}{\longrightarrow} \, \int_{\R} (W_2')^2, \\
& \int_{\R} (\zeta_A^2)'' (W_2')^2 \, \underset{A \to + \infty}{\longrightarrow} \, 0.
\end{align*}
We could use the fact that $W_2 \in L^2$ and the estimate $\left| ( \zeta_A^2 )'' \right| + \left| (\zeta_A^2)'''' \right| \lesssim \frac{1}{A}$ to show that $\int_{\R} \left( \frac{1}{2} ( \zeta_A^2)'''' - ( \zeta_A^2 )'' \right) W_2^2 \, \underset{A \to + \infty}{\longrightarrow} \, 0$. 
However, anticipating the justification of Remark \ref{rk:2} in \S~\ref{prk}, 
we prefer to give a proof that relies only on the fact that $W_2 \in L^\infty ( \R )$, which is true here by the Sobolev injection $H^1(\R)\subset L^\infty(\R)$. Note that, on $\R$,
\[ \left| ( \zeta_A^2 )'' \right| + \left| ( \zeta_A^2 )'''' \right| \lesssim \frac{1}{A^2} \, e^{- \frac{2 |y|}{A}} + \frac{1}{A} \, \theta (y) \]
where $\theta$ is a smooth function that does not depend on $A$ and whose support satisfies $\text{supp} ( \theta ) \subset [-2 \, , 2]$. Therefore
\begin{align*}
\left| \int_{\R} \left( \frac{1}{2} ( \zeta_A^2 )'''' - ( \zeta_A^2 )'' \right) W_2^2 \right| & \lesssim ||W_2||_{L^\infty}^2 \left( \frac{1}{A^2} \int_{\R} e^{- \frac{2 |y|}{A}} \, \text{d}y + \frac{1}{A} || \theta ||_{L^1} \right) \\
& \lesssim \frac{1}{A} ||W_2||_{L^\infty}^2 \, \underset{A \to + \infty}{\longrightarrow} \, 0.
\end{align*}
Hence, gathering the convergence results above, the left-hand side of \eqref{virielA1} converges as follows:
\begin{multline*}
4 \int_{\R} \zeta_A^2 (W_2'')^2 + 4 \int_{\R} \zeta_A^2 (W_2')^2 - 3 \int_{\R} ( \zeta_A^2 )'' (W_2')^2 \\
+ \int_{\R} \left( \frac{1}{2} ( \zeta_A^2 ) '''' - ( \zeta_A^2 )'' \right) W_2^2 \, \underset{A \to + \infty}{\longrightarrow} \, 4 \int_{\R} (W_2'')^2 + 4 \int_{\R} (W_2')^2. 
\end{multline*}
Besides, by the Cauchy-Schwarz inequality and since $\Phi_A^2 \lesssim y^2$ and $( \Phi_A ')^2 \lesssim 1$, it holds that
\begin{align}
\left| \int_{\R} F_W ( 2 \Phi_A W_2' + \Phi_A ' W_2 ) \right| 
&\lesssim \left( \int_{\R} y^2 F_W^2 \right)^\frac12 \left( \int_{\R} (W_2')^2 \right)^\frac12 \nonumber \\ 
&\quad + \left( \int_{\R} Q^{- \frac{1}{2}} F_W^2 \right)^\frac12 \left( \int_{\R} Q^{\frac{1}{2}} W_2^2 \right)^\frac12.
\label{virielA2}
\end{align}
Taking the $\liminf$ as $A \to + \infty$ in \eqref{virielA1} and \eqref{virielA2} it follows that
\begin{align}
\int_{\R} (W_2'')^2 + \int_{\R} (W_2')^2 
&\lesssim \left( \int_{\R} y^2 F_W^2 \right)^\frac12 \left( \int_{\R} (W_2')^2 \right)^\frac12\nonumber\\ 
&\quad + \left( \int_{\R} Q^{- \frac{1}{2}} F_W^2 \right)^\frac12 \left( \int_{\R} Q^{\frac{1}{2}} W_2^2 \right)^\frac12.
\label{viriel1}
\end{align}
Now, we need to estimate $F_W = \lambda QS_N''$. We use \eqref{IM3} to compute $S_N''$ and find that
\begin{align*}
 F_W &=\varepsilon\lambda \gamma \kappa Q(Q'C_1+QC_1')\\
 &\quad +\lambda \varepsilon^2 Q \biggl( -\frac{1}{(1-\beta^2)^2} (2\beta S_V+
 (1+\beta^2) S_N)
 -\frac{\kappa(1+\beta\gamma)}{1-\beta^2}QS_1\biggr).
\end{align*}
Thus,
\[ 
\int_{\R} Q^{- \frac{1}{2}} F_W^2 \lesssim \varepsilon^2 \int_{\R} Q \left( (C_1')^2 + C_1^2 + \varepsilon^2 S_V^2 + \varepsilon^2 S_N^2 + \varepsilon^2 S_1^2 \right) .
\]
The integrals $\int_{\R} Q(C_1')^2$, $\int_{\R} QC_1^2$ and $\int_{\R} QS_1^2$ are estimated via \eqref{estim7SC} and \eqref{estim7SC'}. To control $\int_{\R} QS_V^2$ and $\int_{\R} QS_N^2$, combine \eqref{estim2DT} and \eqref{estim7SC}. Eventually,
\[
\int_{\R} Q^{- \frac{1}{2}} F_W^2 \lesssim \varepsilon^2 \int_{\R} Q^{\frac{1}{2}} ((W_2')^2 + (Z_2')^2+W_2^2 + Z_2^2) .
\]
Similarly,
\[
\int_{\R} y^2 F_W^2 \lesssim \varepsilon^2 \int_{\R} Q^{\frac{1}{2}}
((W_2')^2 + (Z_2')^2+W_2^2 + Z_2^2).
\]
Therefore, injecting these estimates in \eqref{viriel1},
\begin{align*}
\int_{\R} (W_2'')^2 + \int_{\R} (W_2')^2 
&\lesssim \varepsilon \left(\int_{\R} Q^{\frac{1}{2}} ((W_2')^2 + (Z_2')^2+W_2^2 + Z_2^2)\right)^\frac12 \\
&\qquad \times \left(\int_{\R} (W_2')^2 + \int_{\R} Q^{\frac{1}{2}} W_2^2
\right)^\frac12 \\
& \lesssim \varepsilon \int_{\R} \left((W_2')^2 +(Z_2')^2\right)
+ \varepsilon \int_{\R} Q^{\frac{1}{2}} (W_2^2 + Z_2^2) .
\end{align*}
By similar virial arguments on the relation \eqref{MZ}, we prove 
similarly that
\begin{equation}
\int_{\R} (Z_2'')^2 +\int_{\R} (Z_2')^2 \lesssim \varepsilon \int_{\R} \left((W_2')^2 +(Z_2')^2\right)
+ \varepsilon \int_{\R} Q^{\frac{1}{2}} (W_2^2 + Z_2^2) .
\end{equation}
Therefore, for $\omega >0$ small enough,
\begin{equation}
\int_{\R} \left((W_2'')^2 +(Z_2'')^2+ (W_2')^2+(Z_2')^2\right)
 \lesssim \varepsilon \int_{\R} Q^{\frac{1}{2}} (W_2^2 + Z_2^2) .
\label{viriel2}
\end{equation}
Now, we want to apply Lemma \ref{lemh} to the function $W_2$. 
We observe that
\[
 \int_{\R} hW_2 = \langle h , S^2 C_2 \rangle 
 = \langle (S^*)^2 h , C_2 \rangle = 
- \langle Q + 2yQ' , C_2 \rangle 
\]
and so by \eqref{estim3C2},
\[
\left| \int_{\R} hW_2 \right| \lesssim \varepsilon \left( \int_{\R} QC_1^2 \right)^\frac12 + \varepsilon^2 \left( \int_{\R} QC_2^2 \right)^\frac12.
\]
Lastly, by \eqref{estim7SC} and \eqref{estim8SC},
\[
\left| \int_{\R} hW_2 \right| \lesssim 
 \varepsilon \left( \int_{\R} Q^{\frac{1}{2}} (W_2^2 + Z_2^2) \right)^\frac12 .
\]
Therefore, it follows from Lemma \ref{lemh} applied to $W_2$ that
\begin{equation}\label{lemW2}
\int_{\R} Q^{\frac{1}{2}} W_2^2 \lesssim \left( \int_{\R} hW_2 \right)^2 + \int_{\R} (W_2')^2 \lesssim \varepsilon^2 \int_{\R} Q^{\frac{1}{2}} (W_2^2 + Z_2^2) + \int_{\R} (W_2')^2.
\end{equation}
Applying Lemma \ref{lemh} to the function $Z_2$, we prove similarly 
(using \eqref{estim3S2})
that
\begin{equation}
\int_{\R} Q^{\frac{1}{2}} Z_2^2 \lesssim 
\varepsilon^2 \int_{\R} Q^{\frac{1}{2}} (W_2^2 + Z_2^2) 
+\int_{\R} (Z_2')^2.
\label{lemZ2}
\end{equation}
Summing \eqref{lemW2} and \eqref{lemZ2},
and taking $\omega >0$ small enough, we eventually get that
\begin{equation}
\int_{\R} Q^{\frac{1}{2}} (W_2^2+Z_2^2)
 \lesssim \int_{\R} \left((W_2')^2+(Z_2')^2\right).
\label{lemWZ2}
\end{equation}
Combining \eqref{viriel2} and \eqref{lemWZ2}, we obtain
\[
 \int_{\R} \left((W_2'')^2 + (Z_2'')^2 +(W_2')^2 + (Z_2')^2 \right) 
 \lesssim \varepsilon \int_{\R} \left((W_2')^2 + (Z_2')^2 \right).
\]
For $\omega >0$ small enough,
we deduce that $W_2'=Z_2'=0$ and so $W_2=Z_2=0$.
From \eqref{estim7SC} and \eqref{estim8SC}, we get $C_1=S_1=C_2=S_2 = 0$. Moreover, from \eqref{Duha2}, we get $C_N=S_N=C_V=S_V=0$, which finishes the proof of the
theorem.

\subsection{Non-existence of resonance}\label{prk}
We justify Remark \ref{rk:2}.
Assume that 
\[
C_1,S_1,C_2 , S_2 , C_N , S_N , C_V , S_V\in L^\infty,\quad
C_1',S_1',C_2' , S_2' , C_N' , S_N' , C_V' , S_V'\in L^2
\]
satisfy the system \eqref{IM1}-\eqref{IM2} with $\lambda=1$.
From the system, we obtain directly that
$C_N , S_N , C_V , S_V\in L^2$.
All the arguments in \S\ref{S:3.2}-\ref{S:3.4} and \S\ref{S:3.6} can be reproduced in this setting. Moreover, since $W_2 = S^2C_2 = C_2'' - \frac{2Q'}{Q} C_2' + C_2$, we have $W_2 \in L^\infty$ with derivatives in $L^2$. Similarly, $Z_2 \in L^\infty$ with derivatives in $L^2$. See \cite[Proof of Corollary 2]{GR1} for a similar extension for NLS. The virial arguments of \S\ref{S:3.7} also apply to this more general case, and we find $C_1=S_1=C_2 =S_2=0$.

\section*{Appendix}

In this appendix, we derive rigorously Theorem \ref{thmPer} from Theorem \ref{thmIM}
by standard arguments.
We will denote by $\mathcal{D} ( \R )$ the set of smooth, compactly supported functions on $\R$.
Indexes will be used in order to highlight the appropriate variable.
For instance, we will write $\mathcal{D}_{s,y} = \mathcal{D} ( \R_s \times \R_y )$.

\begin{proof}
Take $(U_1,U_2,N,V) \in C^0 ( \R , H^1 ( \R )^2 \times L^2 ( \R )^2 )$ a time-periodic solution of the system \eqref{LinZ} and denote by $T>0$ its period. 
Take $n \geqslant 0$ and $A \gg 1$. We consider a sequence of smooth functions $\widetilde{\theta}_A \in \mathcal{D}_s$ such that 
\[ \widetilde{\theta}_A \underset{A \to + \infty}{\longrightarrow} \mathbbm{1}_{[0,T]} \text{ in $L_s^2$} \quad
\text{and} \quad 
\widetilde{\theta}_A' \underset{A \to + \infty}{\longrightarrow} \delta_0 - \delta_T \text{ in $\mathcal{D}_s'$.} \]
Set $\lambda_n = \frac{2 \pi n}{T}$ and $\theta_A (s) = \cos ( \lambda_n s ) \widetilde{\theta}_A (s)$. Take $\psi \in \mathcal{D}_y$. First, by \eqref{LinZ} we have
\begin{align}
\int_{\R_s \times \R_y} \theta_A '(s) \psi (y) U_1 (s,y) \, \text{d}s \, \text{d}y 
&= \left \langle \partial_s ( \theta_A (s) \psi (y)) , U_1 (s,y) \right \rangle_{\mathcal{D}_{s,y} , \mathcal{D}_{s,y}'} \nonumber \\
&= - \left \langle \theta_A (s) \psi (y) , \partial_s U_1(s,y) \right \rangle_{\mathcal{D}_{s,y} , \mathcal{D}_{s,y} '} \nonumber \\
&= - \left \langle \theta_A (s) \psi (y) , L_- U_2(s,y) \right \rangle_{\mathcal{D}_{s,y} , \mathcal{D}_{s,y}'} \nonumber \\
&= - \left \langle \theta_A (s) (L_- \psi )(y) , U_2 (s,y) \right \rangle_{\mathcal{D}_{s,y} , \mathcal{D}_{s,y}'} \nonumber \\
&= - \int_{\R_s \times \R_y} \theta_A (s) (L_- \psi )(y) U_2(s,y) \, \text{d}s \, \text{d}y.
\label{app1}
\end{align}
Second, by explicit differentiation and Fubini's theorem,
\begin{equation}\label{app2}
\begin{split}
& \int_{\R_s \times \R_y} \theta_A '(s) \psi (y) U_1(s,y) \, \text{d}s \, \text{d}y \\
&\quad = \int_{\R_s} \widetilde{\theta}_A '(s) \cos ( \lambda_n s) F_\psi (s) \, \text{d}s - \lambda_n \int_{\R} \widetilde{\theta}_A (s) \sin ( \lambda_n s) F_\psi (s) \, \text{d}s 
\end{split}
\end{equation}
where $F_\psi (s) := \int_{\R_y} \psi (y) U_1(s,y) \, \text{d}y$. Let us prove a useful regularity lemma before proceeding with the proof.

\begin{lemma} \label{lemapp}
The function $F_\psi$ is smooth on $\R$ and all of its derivatives (including $F_\psi$ itself) are $T$-periodic and bounded on $\R$.
\end{lemma}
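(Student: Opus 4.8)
The plan is to prove the slightly stronger statement that for \emph{every} $\phi\in\mathcal{D}_y$ the four scalar functions
\[
F_\phi(s)=\int_{\R_y}\phi(y)U_1(s,y)\,\mathrm{d}y,\quad
G_\phi(s)=\int_{\R_y}\phi(y)U_2(s,y)\,\mathrm{d}y,\quad
P_\phi(s)=\int_{\R_y}\phi(y)N(s,y)\,\mathrm{d}y,\quad
R_\phi(s)=\int_{\R_y}\phi(y)V(s,y)\,\mathrm{d}y
\]
are smooth, bounded and $T$-periodic on $\R$; the assertion about $F_\psi$ is then the special case $\phi=\psi$. The easy part comes first: since $(U_1,U_2,N,V)\in C^0(\R,H^1(\R)^2\times L^2(\R)^2)$ is $T$-periodic, each of $F_\phi,G_\phi,P_\phi,R_\phi$ is continuous and $T$-periodic, and it is bounded on $\R$ because, for instance, $|F_\phi(s)|\leqslant\|\phi\|\,\|U_1(s,\cdot)\|$ and $s\mapsto\|U_1(s,\cdot)\|$ is a continuous $T$-periodic function on $\R$, hence bounded.

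Next I would turn the system \eqref{LinZ}, understood in $\mathcal{D}_{s,y}'$, into a \emph{closed} first-order system for the quadruple $(F_\phi,G_\phi,P_\phi,R_\phi)$. Testing the first two lines of \eqref{LinZ} against $\theta(s)\phi(y)$ with $\theta\in\mathcal{D}_s$, integrating by parts in $y$, and using that $L_\pm$ is formally self-adjoint and acts only in $y$, one obtains, as identities in $\mathcal{D}_s'$,
\[
F_\phi'=G_{L_-\phi},\qquad G_\phi'=-F_{L_+\phi}+P_{Q\phi}.
\]
For the last two lines of \eqref{LinZ} I would first substitute $\partial_sU_1=L_-U_2$ (the first line of \eqref{LinZ}, multiplied by the smooth function $Q$, is a valid distributional identity) so as to eliminate the entangled time derivative of $U_1$, and then test against $\theta(s)\phi(y)$ and integrate by parts in $y$; this produces relations of the form
\[
\sqrt{\omega}\,P_\phi'=-\beta P_{\phi'}+R_{\phi'}-2\sqrt{\omega}\,G_{L_-(Q\phi)},\qquad
\sqrt{\omega}\,R_\phi'=P_{\phi'}-\beta R_{\phi'}-2\beta\sqrt{\omega}\,G_{L_-(Q\phi)}.
\]
The key structural point is that all the test functions appearing on the right-hand sides, namely $L_-\phi$, $L_+\phi$, $Q\phi$, $\phi'$ and $L_-(Q\phi)$, again lie in $\mathcal{D}_y$, since $Q$ is smooth and the maps $\phi\mapsto L_\pm\phi$, $\phi\mapsto Q\phi$, $\phi\mapsto\phi'$ all preserve $\mathcal{D}_y$.

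The last step is a bootstrap. Because the right-hand sides above are continuous functions of $s$ by the first step, the four $\mathcal{D}_s'$ identities show that $F_\phi,G_\phi,P_\phi,R_\phi$ are in fact $C^1(\R)$, with derivatives that are finite $\R$-linear combinations, with coefficients depending on $\omega$ and $\beta$ but not on $s$, of functions of the same four types evaluated at test functions in $\mathcal{D}_y$. Arguing by induction on $k$ — differentiating such a combination once more and re-applying the four relations, which is legitimate precisely because the test functions involved stay in $\mathcal{D}_y$ — one gets that $F_\phi,G_\phi,P_\phi,R_\phi\in C^k(\R)$ for all $k$, hence $C^\infty(\R)$, and that each $F_\phi^{(k)}$ is a finite linear combination of terms $F_{\phi_i},G_{\phi_i},P_{\phi_i},R_{\phi_i}$ with $\phi_i\in\mathcal{D}_y$. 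Boundedness of $F_\phi^{(k)}$ on $\R$ then follows from the boundedness established in the first step, and its $T$-periodicity from that of $F_\phi$; taking $\phi=\psi$ finishes the proof.

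The point I would be most careful about is the closure of the system in the middle step: after substituting $\partial_sU_1=L_-U_2$ into the third and fourth lines of \eqref{LinZ}, one must check that no time derivative of $U_1$, $N$ or $V$ that cannot be re-expressed through $F_\phi,G_\phi,P_\phi,R_\phi$ survives, and that every $y$-derivative produced by the integrations by parts keeps its test function inside $\mathcal{D}_y$. Once this bookkeeping is settled, everything else is routine integration by parts plus the induction.
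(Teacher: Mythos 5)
Your proof is correct and follows essentially the same route as the paper's: the $s$-derivatives are transferred to the test function via the formally self-adjoint $y$-operators, which preserve $\mathcal{D}_y$, and one then bootstraps. You make the closed system for $(F_\phi,G_\phi,P_\phi,R_\phi)$ fully explicit (including the substitution $\partial_s U_1 = L_- U_2$ to decouple the last two equations of \eqref{LinZ}), which is a more systematic presentation of exactly the iteration that the paper carries out for $F_\psi'$ and then invokes implicitly.
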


\begin{proof}
In order to reproduce the proof for $N$ and $V$ instead of $U_1$, we only use the regularity $U_1 \in C^0 ( \R , L_y^2 )$. From $U_1 \in C^0 ( \R , L_y^2 )$ and the Cauchy-Schwarz inequality, it follows that $F_\psi \in C^0 ( \R )$. Besides, since $U_1$ is $T$-periodic in the variable $s$, the function $F_\psi$ is $T$-periodic and so bounded on $\R$. 
Now, in order to differentiate $F_\psi$, take any $\vartheta \in \mathcal{D}_s$. 
Reproducing the computation \eqref{app1} with the general function $\vartheta$ instead of $\theta_A$, we have
\begin{align*}
\left \langle  F_\psi' , \vartheta \right \rangle_{\mathcal{D}_s', \mathcal{D}_s} =& - \int_{\R_s} F_\psi (s) \vartheta '(s) \, \text{d}s = - \int_{\R_y \times \R_s} \vartheta '(s) \psi (y) U_1(s,y) \, \text{d}y \\
=& \int_{\R_s \times \R_y} \vartheta (s) (L_- \psi )(y) U_2(s,y) \, \text{d}s \, \text{d}y \\
=& \left \langle \vartheta (s) , \int_{\R_y} (L_- \psi ) (y) U_2(s,y) \, \text{d}y \right \rangle_{\mathcal{D}_s , \mathcal{D}_s'}.
\end{align*}
Hence $F_\psi '(s) = \int_{\R_y} (L_- \psi )(y) U_2 (s,y) \, \text{d}y$. 
As above, it follows that $F_\psi '$ is continuous, $T$-periodic and bounded on $\R$. 
We can iterate the computations above, using the system \eqref{LinZ}, and passing all the derivatives we need on $\psi$; we conclude that $F_\psi$ is smooth and that all of its derivatives are $T$-periodic, and thus bounded on $\R$.
\end{proof}
Now, we return to \eqref{app2}.
Since the function $\widetilde{\theta}_A'$ and its limit $\delta_0 - \delta_T$ as $A \to + \infty$ are compactly supported distributions, we can evaluate them against smooth functions (not necessarily compactly supported). Since $s \mapsto \cos ( \lambda_n s) F_\psi (s)$ is $T$-periodic, we have
\[
\int_{\R_s} \widetilde{\theta}_A '(s) \cos ( \lambda_n s) F_\psi (s) \, \text{d}s \underset{A \to + \infty}{\longrightarrow} - \left [ \cos ( \lambda_n s ) F_\psi (s) \right ]_0^T = 0.
\]
Moreover, 
\begin{equation*} 
\int_{\R} \widetilde{\theta}_A (s) \sin ( \lambda_n s) F_\psi (s) \, \text{d}s \underset{A \to + \infty}{\longrightarrow}  \int_0^T \sin ( \lambda_n s ) F_\psi (s) \, \text{d}s. 
\end{equation*}
Hence, letting $A \to + \infty$ in \eqref{app2} leads to
\begin{align}
\lim\limits_{A \to + \infty} \int_{\R_s \times \R_y} \theta_A '(s) \psi (y) U_1(s,y) \, \text{d}s \, \text{d}y 
&= - \lambda_n \int_0^T \sin ( \lambda_n s) F_\psi (s) \, \text{d}s \nonumber \\
&= - \lambda_n \int_0^T \int_{\R_y} \sin ( \lambda_n s) \psi (y) U_1(s,y) \, \text{d}s \, \text{d}y \nonumber \\
&= - \lambda_n \int_{\R_y} \psi (y) S_1^{(n)} (y) \, \text{d}y
\label{app3}
\end{align}
where $S_1^{(n)} (y) := \int_0^T \sin ( \lambda_n s ) U_1(s,y) \, \text{d}s$.

Now, we look at the limit of the right-hand term of \eqref{app1}. Since $\widetilde{\theta}_A$ and $\mathbbm{1}_{[0,T]}$ are compactly supported distributions and $s \mapsto \int_{\R_y} (L_- \psi )(y) U_2(s,y) \, \text{d}y = F_\psi '(s)$ is a smooth bounded function on $\R$,
we have
\begin{align} 
& \lim\limits_{A \to + \infty} \left ( - \int_{\R_s \times \R_y} \theta_A(s) (L_- \psi ) (y) U_2(s,y) \, \text{d}s \, \text{d}y \right ) \nonumber \\
& \quad= - \int_0^T \cos ( \lambda_n s) \int_{\R_y} (L_- \psi )(y) U_2(s,y) \, \text{d}s \, \text{d}y \nonumber \\
& \quad= - \int_{\R_y} (L_- \psi )(y) C_2^{(n)}(y) \, \text{d}y
\label{app4}
\end{align}
where $C_2^{(n)}(y) := \int_0^T \cos ( \lambda_n s ) U_2 (s,y) \, \text{d}s$.

Combining \eqref{app1}, \eqref{app3} and \eqref{app4} it follows that
\begin{equation*}
 \forall \psi \in \mathcal{D}_y , \quad \int_{\R_y} (L_- \psi )(y) C_2^{(n)} (y) \, \text{d}y = \lambda_n \int_{\R_y} \psi (y) S_1^{(n)} (y) \, \text{d}y
\end{equation*}
which means exactly that $L_- C_2^{(n)} = \lambda_n S_1^{(n)}$.

Setting 
\begin{align*}
& S_2^{(n)}(y) = \int_0^T \sin ( \lambda_n s ) U_2(s,y) \, \text{d}s, & C_1^{(n)}(y) = \int_0^T \cos ( \lambda_n s) U_1(s,y) \, \text{d}s, \\
& S_N^{(n)} (y)= \int_0^T \sin ( \lambda_n s) N(s,y) \, \text{d}s, &  C_N^{(n)}(y) = \int_0^T \cos ( \lambda_n s ) N(s,y) \, \text{d}s, \\
& S_V^{(n)}(y)= \int_0^T \sin ( \lambda_n s ) V(s,y) \, \text{d}s, & C_V^{(n)} (y) = \int_0^T \cos ( \lambda_n s ) V(s,y) \, \text{d}s,
\end{align*}
we prove similarly that $(\lambda_n , S_1^{(n)},C_1^{(n)},S_2^{(n)},C_2^{(n)},S_N^{(n)},C_N^{(n)},S_V^{(n)},C_V^{(n)})$ satisfy systems~\eqref{IM1} and \eqref{IM2}. 
(Note that $S_1^{(0)}=S_2^{(0)}=S_N^{(0)}=S_V^{(0)}=0$.)
Provided that $\omega >0$ is sufficiently small, it follows from Theorem \ref{thmIM} that $C_1^{(0)}=a_1Q'$, $C_2^{(0)}=a_2Q$, $C_N^{(0)}=0$, $C_V^{(0)}=0$ and for all $n \geqslant 1$,
 $C_1^{(n)}=S_1^{(n)}=C_2^{(n)}=S_2^{(n)}=C_N^{(n)}=S_N^{(n)}=C_V^{(n)}=S_V^{(n)}=0$. 

Now, we prove that $N=0$. Let $A<B$. We have
\[
\forall n \in \Z , \, \forall \psi \in C^0 ([A,B],\R) , \, \, \, \int_A^B \int_0^T N(s,y) e^{i \frac{2 \pi n}{T} s} \psi (y) \, \text{d}s \, \text{d}y = 0. 
\]
Since $N \in C^0 ( \R_s , L_y^2 )$, we have $N \in L^2 ([0,T] \times [A,B])$.
Since the family of functions $\left \{ e^{i \frac{2 \pi n} s} \psi (y) \, | \, n \in \Z \, \, \text{and} \, \, \psi \in C^0 ([A,B], \R ) \right \}$ is dense in $L^2 ([0,T] \times [A,B])$, it follows that $N=0$ in $L^2([0,T] \times [A,B])$. Since this result holds for any $A<B$ and that $N$ is $T$-periodic, we have $N(s)=0$ in $L_y^2$ for all $s \in \R$. Proceeding similarly with the functions $U_1-a_1 Q'$, $U_2-a_2Q$ and $V$, we obtain Theorem \ref{thmPer}.
\end{proof}

\section*{Acknowledgements}
The authors would like to thank the anonymous referees for their useful comments and corrections.

\end{document}